\newtheorem{theorem}{Theorem}[section]
\newtheorem{lemma}[theorem]{Lemma}
\newtheorem{proposition}[theorem]{Proposition}
\theoremstyle{definition}
\theoremstyle{remark}
\newtheorem{remark}[theorem]{Remark}
\numberwithin{equation}{section}
\def\N{{\mathbb N}}
\def\Z{{\mathbb Z}}
\def\Q{{\mathbb Q}}
\def\R{{\mathbb R}}
\newcommand{\dd}{\text{\normalfont d}}
\newcommand{\abs}[1]{\ensuremath{\left\lvert #1 \right\rvert}} 
\newcommand{\norm}[1]{\ensuremath{\left\lVert #1 \right\rVert}} 
\newcommand{\E}{{\mathbb E}}
\renewcommand{\P}{{\mathbb P}}
\newcommand{\IND}{\mathbbm{1}}
\newcommand{\cB}{\ensuremath{\mathcal B}}
\newcommand{\cC}{\ensuremath{\mathcal C}}
\newcommand{\cD}{\ensuremath{\mathcal D}}
\newcommand{\cJ}{\ensuremath{\mathcal J}}
\newcommand{\cK}{\ensuremath{\mathcal K}}
\newcommand{\cL}{\ensuremath{\mathcal L}}
\newcommand{\cM}{\ensuremath{\mathcal M}}
\newcommand{\cP}{\ensuremath{\mathcal P}}
\newcommand{\cS}{\ensuremath{\mathcal S}}
\newcommand{\cT}{\ensuremath{\mathcal T}}
\newcommand{\cW}{\ensuremath{\mathcal W}}
\newcommand{\cX}{\ensuremath{\mathcal X}}
\newcommand{\cY}{\ensuremath{\mathcal Y}}
\newcommand{\cZ}{\ensuremath{\mathcal Z}}
\newcommand{\bbN}{\ensuremath{\mathbb N}}
\newcommand{\bbQ}{\ensuremath{\mathbb Q}}
\newcommand{\bfE}{\ensuremath{\mathbf E}}
\newcommand{\bfP}{\ensuremath{\mathbf P}}
\begin{document}
	\title[Fractional kinetics from a Markovian system of interacting BTM]{Fractional kinetics equation from a Markovian system of interacting Bouchaud trap models}
	
	\author{Alberto Chiarini}
	\address{Università degli Studi di Padova}
	\curraddr{}
	\email{chiarini@math.unipd.it}
	\thanks{}
	
	
	\author{Simone Floreani}
	\address{Universit\"at Bonn}
	\email{sflorean@uni-bonn.de}
	\curraddr{}
	\thanks{}
	
	
	\author{Federico Sau}
	\address{Università degli Studi di Trieste}
	\curraddr{}
	\email{federico.sau@units.it}
	\thanks{}
		
		\begin{abstract}
			We consider a partial exclusion process evolving on $\Z^d$ in a random trapping environment. In	 dimension $d\ge 2$, we derive the fractional kinetics equation  
			\begin{equation*}\frac{\partial^\beta\rho_t}{\partial t^\beta}  =  \Delta \rho_t
			\end{equation*}
			as a hydrodynamic limit of the particle system. Here,  $\frac{\partial^\beta}{\partial t^\beta}$, $\beta\in(0,1)$, denotes the fractional  derivative in the Caputo sense. We  thus exhibit a Markovian interacting particle system whose frequency field rescales to a sub-diffusive equation corresponding to a non-Markovian process, the Fractional Kinetics process.	
			In contrast, we show that, when $d=1$, the  system rescales to the solution to
			\begin{equation*}
				\frac{\partial \rho_t}{\partial t}= \cL_\beta \rho_t\ ,
			\end{equation*}
			where  $\cL_\beta$ is the random generator of the singular quasi-diffusion known as the FIN diffusion.
		\end{abstract}
	\subjclass[2020]{60K35; 60K37; 60G57; 60G22; 35B27}
	\keywords{Bouchaud trap model; fractional kinetics equation; FIN diffusion; exclusion process; hydrodynamic limit; stochastic homogenization}
	\date{}
	\dedicatory{}
	\maketitle
	\thispagestyle{empty}



\section{Introduction}
Deriving macroscopic equations from  microscopic stochastic dynamics is a well established topic in the literature of interacting particle systems (IPS), and this limiting procedure is known as taking a hydrodynamic limit (HL); see, e.g., \cite{kipnis_scaling_1999}. One of the most studied models is the simple symmetric exclusion process (SSEP), a system of simple random walks on $\Z^d$  jumping to nearest-neighboring sites at rate one and subject to the exclusion rule: at most one particle per site is allowed and 	jumps to  occupied sites are suppressed. The HL for this system is well-known: when diffusively rescaled, the empirical density field of  SSEP converges, in a proper sense, to a deterministic measure, absolutely continuous with respect to the Lebesgue measure and whose density is the solution of the heat equation. The derivation of the HL  for SSEP is easier compared to that for other particle systems  (e.g., \cite[Ch.\ 5]{kipnis_scaling_1999}) because SSEP satisfies stochastic self-duality \cite{liggett_interacting_2005-1}. This means that
the expected evolution of the product of $k$ occupation variables can be studied by looking at SSEP with $k$ particles only. In particular, the expectation of the empirical density field reduces to  an expectation with respect to one simple symmetric random walk, and proving the HL of SSEP amounts to understanding the scaling limit under a parabolic rescaling of this simple symmetric random walk. This reduction is still valid when looking at exclusion processes in specific random environments, and in this article we are going to present a new interacting particle system evolving in a random trapping landscape for which the HL can be obtained from the scaling limit of the associated single particle model.
\subsection{IPS in random environment}
In recent years, there has been an upsurge of activity  around particle systems in random environments (e.g., \cite{nagy_symmetric_2002,goncalves_scaling_2008,jara_quenched_2008,redig_symmetric_2020,floreani_HDL_2021}). A random environment is an extra source of randomness added to the model, aiming at capturing the effects of impurities or inhomogeneities of the underlying medium in which the particles evolve. One of the most studied random environments is obtained by attaching symmetric random weights $\omega_{xy}$, named random conductances, to the edges  of the graph on which the particles hop.
Specializing our discussion to SSEP,   HL with random conductances have been derived in several works under various conditions. For instance, assuming that the conductances  $\omega_{xy}$  are sampled according to a probability which is  stationary and ergodic under translation in $\Z^d$ and that the expectations of both $\omega_{xy}$ and $\omega_{xy}^{-1}$  are finite, A.\ Faggionato \cite{faggionato_bulk_2007,faggionato_cluster_2008,faggionato2022hydrodynamic} and M.\ Jara \cite{jara_hydrodynamic_2011} proved independently that the quenched HL of SSEP is the heat equation with a non-degenerate diffusivity that depends only on the law of the environment. Moreover, the results in \cite{faggionato2022hydrodynamic} go beyond the context of $\Z^d$,  allowing for more general underlying graphs, while those in \cite{faggionato_hydrodynamic_2009,FrancoLandim2010, Valentim2011} weaken the conditions on the random conductances so to obtain sub-diffusive equations known as quasi-diffusions in the limit.
In contrast, several physical systems display time-fractional sub-diffusive behavior at the macroscopic scale (e.g., \cite{Sokolov_etAL02}, and references therein), and  little is known about microscopic \emph{interacting} particle systems related to macroscopic time-fractional	sub-diffusivity. The aim of this paper is to introduce a microscopic interacting dynamics that properly rescaled exhibits such time-fractional macroscopic behavior.
\subsection{Model}
The particle system that we consider consists of interacting Bouchaud trap models (see, e.g., \cite{ben-arous_cerny_dynamics_2006}), generalizing the inhomogeneous partial  exclusion process that was introduced in \cite{floreani_HDL_2021}. We start by recalling the definition of the Bouchaud trap model (BTM), which  is  a random walk in random environment introduced in \cite{Bou92} as a model displaying trapping phenomena, as well as aging. The random environment is given by a collection  of i.i.d.\ $\N$-valued  random variables $\alpha=(\alpha_x)_{x\in\Z^d}$ (here and all throughout, $\N:=\{1,2,\ldots\}$), which, letting $\Q$ denote the corresponding product measure, satisfy 
\begin{equation}\label{eq:heavy-tail-0}
\Q\big(\alpha_0> u\big)= u^{-\beta}\left(1+o(1)\right)\ ,\quad \text{as}\ u\to \infty\ ,	
\end{equation}   
for some $\beta \in (0,1)$. We remark that having $\N$-valued  random variables is not the standard choice in the context of trap models, but it will be necessary for the particle system that we will consider.

For every $a\in [0,1]$, the \textit{Bouchaud trap model with parameter $a\in [0,1]$} (${\rm BTM}(a)$) in the environment $\alpha$ sampled according to $\Q$ is the continuous-time random walk $(X_t)_{t\ge 0}$ on $\Z^d$ with infinitesimal generator given, for all $g: \Z^d\to \R$, by
\begin{equation}\label{eq:generator-BTM}
A g(x):=  \alpha_x^{a-1}
\sum_{y\sim x}   \alpha_y^a\left(g(y)-g(x)\right)\ ,\qquad x \in \Z^d\ ,
\end{equation}
where the above summation runs over $y \in \Z^d$ such that $|y-x|=1$. One refers to the case $a=0$ as the \textit{symmetric} case while to	 $a\in(0,1]$ as the \textit{asymmetric} cases. Note that the symmetric version of ${\rm BTM}$  consists of a symmetric simple random walk on $\Z^d$ with  exponential holding times, whose means are i.i.d.\ and heavy-tailed distributed.

In this work, we analyze a system of infinitely-many ${\rm BTM}(a)$, which, instead of evolving as independent particles, are subject to a partial-exclusion rule. In more detail, given  $\Q$ as in \eqref{eq:heavy-tail-0} and $\alpha=(\alpha_x)_{x\in\Z^d}$ sampled according to $\Q$, we consider the particle system $(\eta_t)_{t\ge 0}$ on the state space $\Xi:=\prod_{x\in\Z^d} \{0,1,\ldots, \alpha_x\}$, and whose  infinitesimal evolution is described by the  (formal) Markov generator $\cL$,  acting on local functions (i.e., functions that depend on configurations only through their values over a finite number of sites)  $\varphi:\Xi\to \R$ as follows: 
\begin{align}\label{eq:generator-IPS}
\cL \varphi(\eta)= \sum_{x\in \Z^d}\eta(x)\, \alpha_x^{a-1}\sum_{y\sim x} \alpha_y^a\left(1-\eta(y)/\alpha_y\right) \left(\varphi(\eta^{x,y})-\varphi(\eta)\right)
\ ,\qquad \eta \in \Xi\ .
\end{align}
Here, for a configuration $\eta \in \Xi$ and sites $x, y \in \Z^d$, $\eta(x)$ represents the number of particles in $x\in \Z^d$, while $\eta^{x,y}:= \eta-\delta_x+\delta_y\in \Xi$	 denotes the particle configuration obtained from $\eta\in \Xi$ by removing a particle from $x\in \Z^d$ (if any) and placing it on $y\in \Z^d$ (if not already full, i.e., $\eta(y)<\alpha_y$). 
Note that the term  $-\eta(y)/\alpha_y$ in \eqref{eq:generator-IPS} introduces an exclusion-like interaction among particles, which otherwise would evolve as independent copies of ${\rm BTM}(a)$ as described in \eqref{eq:generator-BTM}. Thus, the variable $\alpha_x$ has two interpretations in this model. On the one side, it enters in the jump rates of the particles; on the other side, it represents the maximal occupancy at site $x$ (or the size of the trap at site $x$; see also Figure \ref{fig:imagetraps}).
\begin{figure}
	\centering
	\includegraphics[width=0.85\linewidth]{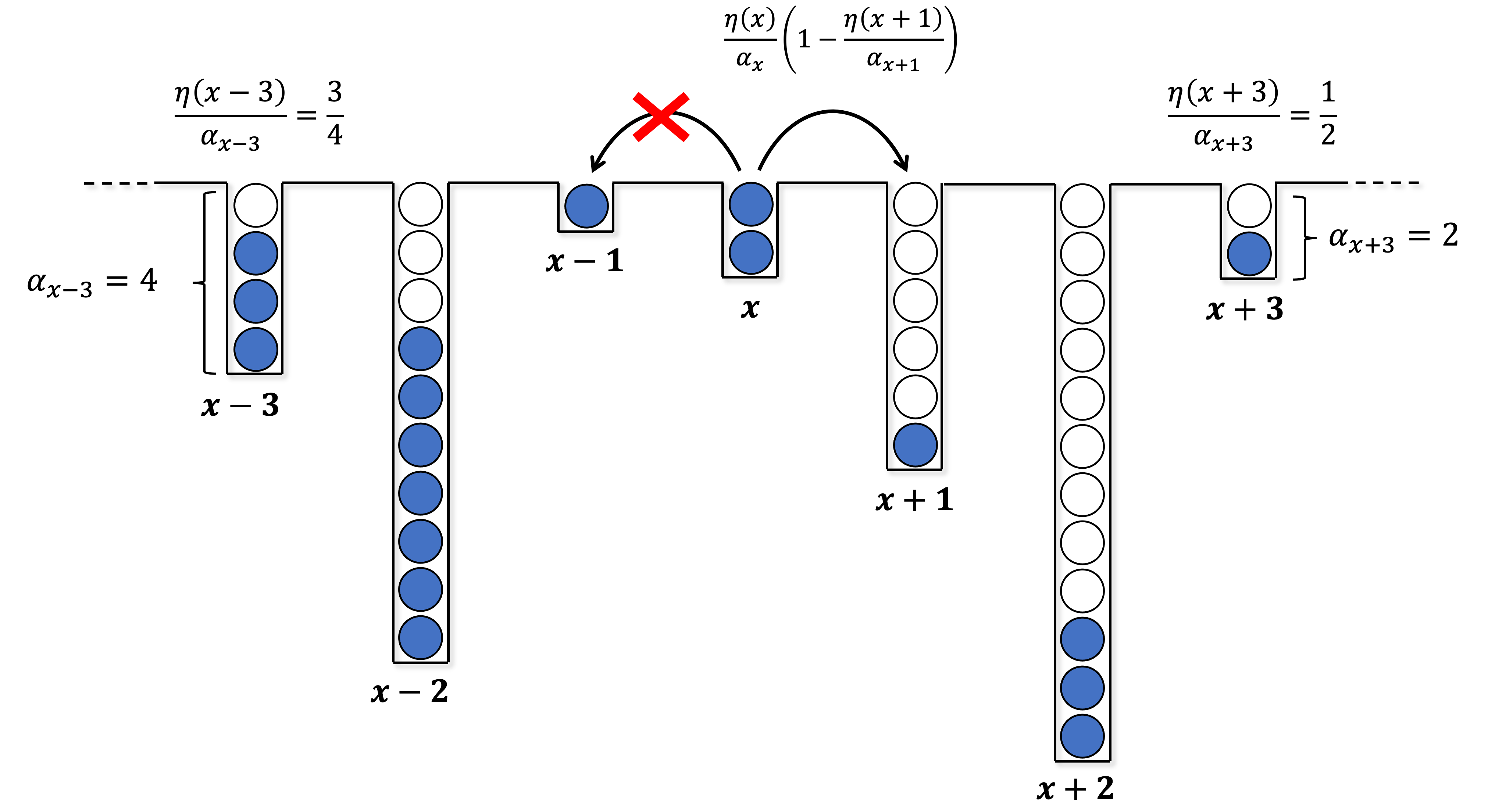}
	\caption{Schematic representation of the interacting system of ${\rm BTM}(a)$ on $\Z$ for a given realization of the environment, and with $a=0$. In each site $y\in\Z$,  there is a trap of size $\alpha_y$; when the trap is full, no particles are allowed to jump there.}
	\label{fig:imagetraps}
\end{figure}
\subsection{Summary of main results}
Denote by $\P_\eta$ the quenched law of the interacting particle system above when starting from $\eta$. The main result of the paper concerns the asymptotic behavior as $n\to \infty$ of $(\cZ^n_t)_{t\ge 0}$, the empirical field for a space-time rescaling of $(\eta_t)_{t\ge 0}$,  given, for some  $\theta_n\in (0,\infty)$ (this scaling factor $\theta_n$ is the relevant time scale for a single ${\rm BTM}(a)$; see \eqref{eq:theta-n} and the subsequent text), by
\begin{equation}\label{eq: frequency field intro}\cZ^n_t:=	\frac{1}{n^d}\sum_{x\in \Z^d} \frac{\eta_{t\theta_n}(x)}{\alpha_x}\, \delta_{x/n}\ , \qquad n \in \bbN\ .\end{equation}
In what follows, we refer to such field as empirical frequency field (see \S\ref{sec:HL} for more details).
Interestingly, the scaling and the  limiting behavior change with the dimension, as we now briefly summarize (we refer to Theorem \ref{th:HDL} below for the precise statement).

Start by considering $d\ge 3$, and set $\theta_n=\theta_{n,\beta,d\ge3}:= n^{2/\beta}$. Fix a  macroscopic profile, that is, a uniformly continuous function $\bar \rho: \R^d\to [0,1]$; furthermore, for every collection $\alpha=(\alpha_x)_{x\in \Z^d}$ and $n\in \N$, let the process $\eta_t$ be initialized at $t=0$ according to the local equilibrium measure $\mu_n  = \mu_{\alpha,\bar \rho}^n= \otimes_{x\in \Z^d}\,{\rm Bin}(\alpha_x,\bar \rho(x/n))$.  Then,  for $\Q$-a.e.\ $\alpha$, the field $\cZ^n_t$
 converges to a deterministic measure absolutely continuous with respect to the Lebesgue measure and with deterministic density $(\rho_t)_{t\ge 0}$ solving, on $\R^d$,
\begin{equation}\label{eq: FK intro}
\frac{\partial^\beta}{\partial t^\beta} \rho_t = D_{\mathrm{eff}}\Delta \rho_t\ ,\quad \text{with}\ \rho_0=\bar \rho\ ,
\end{equation}
where $D_{\mathrm{eff}}=D_{\mathrm{eff}}(\Q,a,d)\in (0,\infty)$ does not depend on the specific realization of the environment $\alpha$.
Equation \eqref{eq: FK intro} --- with the fractional derivative in time of order $\beta\in(0,1)$ meant in the Caputo sense (see \eqref{eq: caputo derivative} below) --- is a well-studied time-fractional sub-diffusive PDE, known as \textit{fractional kinetics equation} (FKE), e.g., \cite{chen_time-2017}. 
In words, our main result states that the HL of the partial exclusion process in a heavy-tailed random environment, when sub-diffusively rescaled, is a deterministic measure, absolutely continuous with respect to the Lebesgue measure and whose density is deterministic and solves the FKE. As we show in Theorem \ref{th:HDL} below, an analogous result holds in dimension $d=2$, but time has to be rescaled differently. More precisely, the same claim holds in $d=2$ for $\cZ^n_t$ given in \eqref{eq: frequency field intro}, but with $\theta_n$ now given by 
$ \theta_n=\theta_{n,\beta,d=2}:= n^{2/\beta}(\log n)^{1-1/\beta}$.

We emphasize that, for the field $\cZ_t^n$ and for $d\ge 2$,  the randomness of the environment as well as the randomness of the microscopic dynamics homogenize when passing to the macroscopic scale. 
This is not the case if one looks at the scaling behavior of a different field, namely the most typically studied empirical density field, 	 defined as 
\begin{equation}\label{eq: density field intro}\cX^n_t=\frac{1}{n^{d/\beta}}\sum_{x\in \Z^d}\eta_{t\theta_n}(x)\,\delta_{x/n}\ .
\end{equation}
Indeed, when initializing the process as done above, under $\Q(\dd \alpha)\mu_n(\dd \eta)\P_\eta$ and for all $t\ge0$,   $\cX^n_t$
converges in distribution (with respect to a suitable topology) as $n\to \infty$ to $\rho_0\cW$, where  $\rho_0\cW(\dd x)= 	\bar \rho (x)\cW(\dd x)$, and
$\cW$ is the random measure on $\R^d$ given by
\begin{equation}\label{eq:W}
	\cW=\cW_\beta:= \sum_{i}v_i\,	\delta_{x_i}\ ,
	\end{equation}
	with $\{(v_i,x_i)\}_i$ being the support of a Poisson point process on $(0,\infty)\times \R^d$ with intensity measure $\beta v^{-(1+\beta)}\dd v\dd x$. In other words, the randomness of the environment still survives in the  reference measure, but the macroscopic dynamics is frozen. In a few words, this mechanism is explained by the fact that the fields $\cX_t^n$ only capture  those huge, randomly located, piles of particles, which, however, do not manage to significantly move over times of the order of $\theta_n$.

We also analyze the one-dimensional case and find a much different behavior which involves both a different  rescaling and  limiting equation. In this case, under $\Q(\dd \alpha)\mu_n(\dd \eta)\P_\eta$, the field $\cX^n_t$ given in \eqref{eq: density field intro}, but with $\theta_n$ now defined as $\theta_n=\theta_{n,\beta,d=1}:= n^{1+1/\beta}$,	
converges in distribution  as $n\to \infty$ to $\rho_t\cW$, where $\cW$ is the same as before, while $\rho_t$ is random and solves, on $\R$, 
\begin{equation}\label{eq:d=1}\frac{\partial}{\partial t} \rho_t(x)=\frac{\partial}{\partial \cW}\frac{\partial}{\partial  x}\rho_t(x)\ ,\quad \text{with}\ \rho_0=\bar \rho\ .
\end{equation}
Here, $\frac{df}{d\cW}=g$ has to be interpreted as
\begin{equation}\label{eq:df/dw}
f(x)= c +\int_{-\infty}^x g(z)\, \cW (\dd z)\ ,\quad x \in \R\ ,
\end{equation}
for some $c\in \R$.
In words, in $d=1$, our main result states that the HL of the partial exclusion process in a heavy-tailed random environment, when sub-diffusively rescaled, is a random measure, absolutely continuous with respect to $\cW$ in \eqref{eq:W} and whose density is random  and solves the random quasi-diffusion PDE given in \eqref{eq:d=1}.
 We obtain an analogous result  for $\cZ_t^n$ in $d=1$, with the one-dimensional Lebesgue measure replacing the random measure $\cW$ above (see~\eqref{eq:theorem-d=1} in Theorem~\ref{th:HDL}).

\subsection{Comparison with the literature}
The only interacting system which has been shown to rescale to a sub-diffusive equation is SSEP with i.i.d.\ conductances satisfying, for some  $\beta \in (0,1)$,
\begin{equation*}
\Q(\omega_{xy}^{-1}>u)=u^{-\beta}(1+o(1))\ ,\quad\text{as}\ u\to \infty\ .
\end{equation*}
When properly rescaling the empirical density field, it has been proven for $d=1$ in \cite{faggionato_hydrodynamic_2009}  (see \cite{FrancoLandim2010,Valentim2011} for the more general case with $d\ge 1$) that the HL is given by 
\begin{equation*}\frac{\partial}{\partial t} \rho_t(x)=\frac{\partial}{\partial x}\frac{\partial}{\partial W}\rho_t(x)\ ,\quad x \in \R\ ,
\end{equation*}
where $W$ is a random \textit{càdlàg} function with heavy-tailed jumps of parameter $\beta$, and  $\frac{df}{d W}=g$ is interpreted as in \eqref{eq:df/dw} with $W$ in place of $\cW$; for further details, we refer the interested reader to \cite{faggionato_hydrodynamic_2009}.

From the point of view of one-particle models, the literature on	 random walks converging to sub-diffusive processes is vast, and a prominent place is taken by the ${\rm BTM}$ (see \cite{ben-arous_cerny_dynamics_2006}, and references therein).  Scaling limits of ${\rm BTM}$ have been studied (see, e.g., \cite{FIN02,BenArousCerny05, BarlowCerny, cernyEJP}), and the limiting processes are the so-called Fontes-Isopi-Newman (FIN) diffusion in $d=1$, and the fractional kinetics process in $d\ge 2$. 

The article~\cite{JaraLandimTeixeira} is the only one dealing with rigorous scaling limits for systems of many ${\rm BTM}$. There,  the authors consider a system of \emph{independent} symmetric particles of this kind and, in $d=1$, obtain 
\eqref{eq:d=1} as a HL, i.e., the same HL that arises for the  one-dimensional partial exclusion process in a heavy-tailed random environment as considered in this paper.
Interacting ${\rm BTM}$ have not been studied so far: our model is the first example in this direction and it has the advantage of still satisfying stochastic duality.

 Let us emphasize that all previous works on hydrodynamic limits for many trap models went through the well established and developed formulation via martingale problems, combined with one- and two-block estimates as in, e.g., \cite{kipnis_scaling_1999}. While this formulation proved to be extremely effective in the derivation of local (in time) PDEs in the last three decades, these techniques do not easily extend  to time-fractional limits (or, equivalently, to evolution equations associated to non-Markovian processes). In contrast, our approach   via duality and stochastic representations of the solutions to time-fractional PDEs provides a direct connection between the pre-limit and the hydrodynamic limit, flexible enough so to include time-fractional setups.

\subsection{Organization of the paper}
The rest of the paper is organized as follows. In \S\ref{section: model and results}, we rigorously state the main result, while
\S\ref{sec:proof-th:main-general} is devoted to its proof. In   Appendix \ref{section: construction}, we gather all the details on the rigorous construction of the infinite particle system.

\section{Main result}\label{section: model and results}
In what follows, $\N=\{1,2,\ldots\}$. Moreover, $\cC_b(\R^d)$ denotes the space of continuous and bounded functions on $\R^d$,  $\cC_c^+(\R^d)$ its subspace of non-negative functions with compact support, while $\cM_v(\R^d)=(\cM_{loc}(\R^d),\tau_v)$ denotes the space of locally finite measures on $\R^d$ endowed with the vague topology $\tau_v$, i.e., the weakest topology on $\cM_{loc}(\R^d)$ for which all mappings $\nu \in \cM_{loc}(\R^d)\mapsto \langle \nu|f\rangle:=\int_{\R^d}f(x)\, \nu(\dd x) \in \R$,  $f\in \cC_c^+(\R^d)$, are continuous. Throughout, we write \textquotedblleft$Y^n\underset{n\to \infty}\Longrightarrow Y$\textquotedblright\  for convergence in distribution of real-valued random variables $Y^n,Y\in \R$, and \textquotedblleft$\cY^n\underset{n\to \infty}\Longrightarrow \cY$ in $\cM_v(\R^d)$\textquotedblright\ for convergence in distribution of random measures $\cY^n,\cY\in \cM_v(\R^d)$, i.e., 
\begin{align}\label{eq:conv-measures}
\cY^n\underset{n\to \infty}\Longrightarrow \cY\quad {\rm in}\ \cM_v(\R^d)\qquad \text{if}\qquad  \langle \cY^n|f\rangle \underset{n\to \infty}\Longrightarrow \langle \cY|f\rangle\ \text{for all}\ f\in \cC^+_c(\R^d)\ .
\end{align} 

Let us first introduce the underlying random environment. 
Fix $\beta \in (0,1)$ and $d\ge 1$. 
Recall from \eqref{eq:W} the definition of the random measure \begin{equation}
	\cW=\cW_\beta:= \sum_i v_i\, \delta_{x_i} \in \cM_v(\R^d)\ ,
\end{equation}  generated by the Poisson point process $\{(v_i,x_i)\}_i$ on $(0,\infty)\times \R^d$ with intensity measure $\beta v^{-(\beta+1)}\dd v \dd x$.  We define, for all $n \in \N$,  
\begin{equation}\label{eq:Wn}
\cW^n=\cW^n_\beta:= \frac{1}{n^{d/\beta}} \sum_{x\in \Z^d} \alpha_x\, \delta_{x/n} \in \cM_v(\R^d)\ ,
\end{equation}
where $(\alpha_x)_{x\in \Z^d}$ is a collection of $\N$-valued i.i.d.\ random variables with product law $\Q=\Q_\beta$ whose marginals satisfy
\begin{equation}\label{eq:heavy-tail}
\Q\big(\alpha_0> u\big)= u^{-\beta}\left(1+o(1)\right)\ ,\quad \text{as}\ u\to \infty\ .	
\end{equation} 
By inspection of the moment generating functions, the following convergence in distribution in $\cM_v(\R^d)$ holds (see, e.g., \cite[Lem.\ 5.3]{croydon2017time}): recalling the definition of   $\cW$ from \eqref{eq:W}, we have
\begin{align}\label{eq:conv-W}
\cW^n\underset{n\to \infty}\Longrightarrow \cW\ \quad \text{in}\ \cM_v(\R^d)\ .
\end{align}

Before presenting the interacting particle system, we recall the definition of a single ${\rm BTM}$, and some of its main properties.

\subsection{${\rm BTM}$ and its scaling limits}\label{sec:BTM}
Fix $a\in [0,1]$.
Given $\Q$ satisfying \eqref{eq:heavy-tail},  the \textit{Bouchaud trap model with parameter $a\in [0,1]$} (${\rm BTM}(a)$) in the environment $\alpha$ sampled according to $\Q$ is the continuous-time random walk $(X_t)_{t\ge 0}$ on $\Z^d$ with  infinitesimal generator $A$ given in \eqref{eq:generator-BTM}.
It is well-known (see, e.g., \cite{BarlowCerny} and references therein) that, $\Q$-a.s., the ${\rm BTM}(a)$	 does not explode in finite time a.s.\ when starting from any locations $x\in \Z^d$.
 Note that $A$  is
symmetric in $L^2(\cW^n)$ with $n=1$.  In what follows, $\mathbf P^\alpha_x$ and $\mathbf E^\alpha_x$ denote the quenched law and corresponding expectation of $X_t$ when $X_0=x\in \Z^d$.

We are mainly interested in the scaling properties of ${\rm BTM}(a)$. For this purpose, let, for all $n \in \N$ and continuous and bounded functions $g\in \cC_b(\R^d)$,
\begin{equation}\label{eq:Pnt}
P^n_t g(x/n):= 	\mathbf E^\alpha_x\left[g(X_{t\theta_n}/n)\right]\ ,\qquad x \in \Z^d\ ,\ t \ge 0\ ,	
\end{equation}
denote the  semigroup of a sub-diffusive rescaling of $X_t$, where space has been shrunk by a factor $1/n$, and time 	sped-up by the following amount:
\begin{equation}\label{eq:theta-n}
\theta_n=\theta_{n,\beta,d}:=\begin{cases}
n^{2/\beta} &\text{if}\ d \ge 3\\
n^{2/\beta}\left(\log (1+n)\right)^{1-1/\beta} &\text{if}\ d=2\\
n^{1+1/\beta} &\text{if}\ d=1\ .
\end{cases}
\end{equation}
As we will discuss in \S\ref{sec:proof-th:main-general2a} and \S\ref{sec:proof-th:main-general2c}, there exists some strictly positive constant $\kappa=\kappa(\Q,\beta,a,d)$, which does not depend on the realization of the environment, for which $P^n_t$ converges, in a suitable sense, to $\mathcal P_{\kappa t}$ as $n\to \infty$, where, for every $g \in \cC_b(\R^d)$, $g_t:=\mathcal P_t g$ solves the following  PDEs, on $\R^d$,
\begin{equation}
	\begin{aligned}
		&\frac{\partial}{\partial t}g_t(x) = \frac{\partial}{\partial \cW}\frac{\partial}{\partial x}g_t(x)\ , &&\qquad d=1\ ,\\
		&\frac{\partial^\beta}{\partial t^\beta}g_t(x) = \Delta g_t(x)\ , &&\qquad d \ge 2\ ,
	\end{aligned}
\end{equation}
with $g_0=g$.  We recall that one writes $\frac{\partial}{\partial \cW}\frac{\partial}{\partial x}f=h$  if 
\begin{equation}\label{eq:generator-FIN}
f(x)=c_1 +c_2x +\int_0^x\int_0^y h(z)\cW(\dd z)\, \dd y\ ,\qquad x \in \R\ ,
\end{equation}
holds for some $c_1,c_2\in \R$, while $\frac{\partial^\beta}{\partial t^\beta}$ is meant in the Caputo sense (e.g., \cite[Eq.\ (1.1)]{chen_time-2017}):	
\begin{equation}\label{eq: caputo derivative}
\frac{\partial^\beta}{\partial t^\beta} f(t):=\frac{1}{\Gamma(1-\beta)}\int_0^t \frac{1}{(t-s)^\beta}f'(s)\, 	\dd s\ ,\qquad t\ge 0\ ,\ f \in \cC^1(\R)\ .
\end{equation}

Interestingly, $(\mathcal P_t)_{t\ge 0}$ admits a stochastic representation:
\begin{itemize} \item  for $d=1$, it is the (random) Markov semigroup of  the ${\rm FIN}$ diffusion $(Z(t))_{t\ge 0}$ of parameter $\beta \in (0,1)$ associated to the random measure $\cW=\cW_\beta$  in \eqref{eq:W};
	\item for $d\ge 2$, it is the pseudo-semigroup of the semi-Markov process $(FK_{\beta}(t))_{t\ge 0}$  known as fractional kinetics process of parameter $\beta \in (0,1)$.
\end{itemize}
More precisely, letting $B=(B(t))_{t\ge 0}$ denote the $d$-dimensional Brownian motion, for $d=1$, the FIN diffusion with parameter $\beta \in (0,1)$ is given by
\begin{equation}
Z(s):=B(\psi^{-1}(s))\ ,\qquad s \ge 0\ ,
\end{equation}
where $\psi{^{-1}}$ denotes the generalized right inverse of 
$t\mapsto\psi(t):=\int_\R \ell(t,x)	\cW(\dd x)$, with  $\ell(t,x)$ being the local time of $B(t)$ at $x\in \R$.
In other words, $Z$ is a diffusion process (actually,  often referred to as a quasi- or singular diffusion, see, e.g., \cite{ben-arous_cerny_dynamics_2006}) expressed as a time change of a standard one-dimensional Brownian motion  with speed measure $\cW$.  As for 	 $FK_\beta$ in $d\ge2$, we have
\begin{equation}\label{eq: FK process}
FK_{\beta}(t):=B(V_\beta^{-1}(t))\ ,\qquad t \ge 0\ ,
\end{equation} 
where  $V_\beta$ stands for a $\beta$-stable subordinator independent of $B$, and \begin{equation*}V_\beta^{-1}(t):=\inf\{s\ge 0:V_\beta(s)>t\}\ .
	\end{equation*}
Finally, we remark that both processes introduced above are sub-diffusive, i.e., their mean square displacement scales as 
\begin{equation*}E[Z(t)^2]=t^{2\beta/(1+\beta)}\quad \text{and}\quad E[FK_\beta(t)^2]=t^\beta\ ,\qquad t\ge 0\ . 
\end{equation*}
For further properties of $Z(t)$ and $FK_\beta(t)$, we refer the interested reader to \cite[\S\S3,4]{ben-arous_cerny_dynamics_2006} and references therein.

\subsection{Interacting particle system} Recall the interacting system of ${\rm BTM}(a)$ with (formal) generator $\cL$ given in \eqref{eq:generator-IPS}. For $\Q$-a.e.\ environment $\alpha$, the existence of the associated particle system with finitely many particle is an immediate consequence of the a.s.\ non-explosiveness of the ${\rm BTM}(a)$ starting from all positions and a stirring representation of the exclusion system (for more details, we refer to Appendix \ref{sec:appendix-second-construction}). The corresponding infinite particle system $\eta_t$ turns out to be  well-defined, $\Q$-a.s.,  for all times and initial configurations as being the unique Markov process on $\Xi$ whose joint moments satisfy the following  identities, a generalization of the most classical duality relations for the usual symmetric exclusion process (see, e.g., \cite[Ch.\  VIII, Th.\ 1.1]{liggett_interacting_2005-1}): for all $k\in \N$, for all  $\eta,\xi\in \Xi$ with $\xi$ being a finite configuration, and for all $t\ge 0$,
\begin{equation}\label{eq:dual-rel-k}
	\E_\eta\big[D(\xi,\eta_t)\big]=  \E_\xi\big[D(\xi_t,\eta)\big]\ ,
\end{equation}
where
\begin{equation}
	D(\xi,\eta):= \prod_{x\in \Z^d} \frac{\eta(x)!}{(\eta(x)-\xi(x))!}\frac{(\alpha_x-\xi(x))!}{\alpha_x!}\IND_{\xi(x)\le \eta(x)}\ .
\end{equation} 
Again, for more details on this construction, we refer to Appendix \ref{sec:appendix-first-construction}.

Fix a realization of $(\alpha_x)_{x\in \Z^d}$ according to $\bbQ$. For a probability measure $\mu$ on $\Xi$,  $\P^\alpha_\mu$ and $\E^\alpha_\mu$ denote the law and corresponding expectation, respectively, of the particle system $\eta_t$ such that  $\eta_0 \sim \mu$.
By a detailed balance computation, it is not difficult to show that, for every $\rho \in [0,1]$, the product measure $\nu_\rho:= \otimes_{x\in \Z^d}\, {\rm Bin}(\alpha_x,\rho)$ is reversible for $\eta_t$.

\subsection{Hydrodynamic limit}\label{sec:HL}
In order to present our main result, we introduce:
\begin{enumerate}
	\item  a uniformly continuous function  $\rho_0: \R^d\to[0,1]$,  playing the role of initial macroscopic datum;
	\item \label{it:2}	the  measures  $ \rho_t{\rm Leb}_{\R^d}(\dd x)=  \rho_t(x)	 {\rm 	Leb}_{\R^d}(\dd x)$ and $ \rho_t\cW(\dd x)=	   \rho_t(x)\cW(\dd x)$ in $\cM_v(\R^d)$, where $ {\rm Leb}_{\R^d}$ denotes the Lebesgue measure on $\R^d$, $\cW$ is random and given in \eqref{eq:W}, and  \begin{equation}\label{eq:rho-t}\rho_t:= \mathcal P_{\kappa t}\rho_0\ ,
		\end{equation} for $\kappa=\kappa(\Q,\beta,a,d)>0$ as given in \S\ref{sec:BTM} above;
	\item two  fields associated to the sub-diffusive rescaling of $\eta_t$ as in	 \eqref{eq:Pnt}:
	\begin{equation}\label{eq:frequency-fields}
	t \in [0,\infty)\longmapsto \cZ^n_t:=	\frac{1}{n^d}\sum_{x\in \Z^d} \frac{\eta_{t\theta_n}(x)}{\alpha_x}\,	\delta_{x/n}\in \cM_v(\R^d)\ .
	\end{equation}
	and 
	\begin{equation}\label{eq:density-fields}
	t \in [0,\infty)\longmapsto	\cX^n_t:= \frac{1}{n^{d/\beta}}\sum_{x\in \Z^d} \eta_{t\theta_n}(x)\, \delta_{x/n} \in \cM_v(\R^d)\ ,
	\end{equation}
	where $\theta_n$ is defined in \eqref{eq:theta-n}.
\end{enumerate}
While $\cX^n_t$ is the well-studied empirical density field, $\cZ^n_t$ is less standard and we refer to it as \textit{empirical frequency field}. There are two main differences between these two fields. On the one hand, $\cZ^n_t$ considers the frequency variables (thus, justifying the name) of occupied slots at each site, rather than the occupancy variables themselves; we remark that frequency as a standard observable appears extensively in the population genetics literature, where, e.g., sites represent colonies, particles individuals of type $A$, say, and holes individuals of type $a$ (see, e.g., \cite{etheridge2011}). On the other hand, for $\cZ^n_t$, each Dirac delta	 is weighted by the most standard weight $n^{-d}$, the unit volume of a cube in $\R^d$ of size $1/n$.

Recall \eqref{eq:conv-measures}. We now present the main result of the paper. 

\begin{theorem}[Hydrodynamic limit]\label{th:HDL} 	Let $a\in [0,1]$. For all $n \in \N$ and $\alpha =(\alpha_x)_{x\in \Z}$, let \begin{equation}\label{eq:product-local-eq}\mu_n=\otimes_{x\in \Z^d}\, {\rm Bin}(\alpha_x,\rho_0(x/n))
		\end{equation} be the initial distribution of the particle system. Recall $\rho_t$ from \eqref{eq:rho-t}. Then, for all  $t\ge0$, we have, for $d\ge 2$,  \begin{equation}\label{eq:theorem-d>2}\cZ^n_t\underset{n\to \infty}\Longrightarrow \rho_t\,{\rm Leb}_{\R^d}\quad \text{and}\quad \cX^n_t\underset{n\to \infty}\Longrightarrow \rho_0\cW\quad \text{in $\cM_v(\R^d)$}\ ,\end{equation}
		while, for $d=1$,  \begin{equation}\label{eq:theorem-d=1}
			\cZ^n_t\underset{n\to \infty}\Longrightarrow \rho_t\,{\rm Leb}_\R\quad \text{and} 	\quad \cX^n_t\underset{n\to \infty}\Longrightarrow \rho_t\cW\quad \text{in $\cM_v(\R)$}\ .
			\end{equation}
\end{theorem}
 The limiting objects in \eqref{eq:theorem-d>2}--\eqref{eq:theorem-d=1} for the empirical density field $\cX_t^n$ are random, while those  for the frequency field $\cZ_t^n$ are deterministic in $d\ge 2$ and random in $d=1$. Moreover,  Theorem \ref{th:HDL} states that such convergences hold in law, namely,  keeping into account the joint randomness of the underlying environment $\alpha \sim \Q$ and  of the particle dynamics. Nevertheless, we will show (Remark \ref{remark: quenched vanish noise}) that, for $d\ge 2$, the convergence for $\cZ_t^n$ is \textit{quenched}, i.e., for $\Q$-a.e.\ $\alpha$,  $\cZ_t^n$ converges in law (thus, in probability) to the deterministic measure $\rho_t{\rm Leb}_{\R^d}$, namely, 
 \begin{equation}\label{eq:quenched-result}
\Q\text{-a.s.}\ ,\qquad \lim_{n\to \infty}\P_{\mu_n}\left(\left| \langle \cZ_t^n|f\rangle- \langle \rho_t\,{\rm Leb}_{\R^d}|f\rangle\right|>\delta\right)=0\ ,
 \end{equation} 
holds true for all $\delta >0$, $t\ge 0$ and $f\in \cC_c^+(\R^d)$. Finally, we discuss the (non-essential) role of the initial measure $\mu_n$ in \eqref{eq:product-local-eq} in Remark \ref{rem:prod-measure} below.

\section{Proof of Theorem \ref{th:HDL}}\label{sec:proof-th:main-general}

In this section we prove  Theorem \ref{th:HDL}, with  $\mu_n=\otimes_{x\in \Z^d}\, {\rm Bin}(\alpha_x,\rho_0(x/n))$. In what follows, for $x\in \Z^d$ and $t\ge 0$, we set
\begin{equation}\label{eq:duality1}
	D^n_t(x/n):=\eta_{t\theta_n}(x)/\alpha_x\ \quad \text{and}\quad \rho^n_t(x/n):= \E_{\mu_n}\left[D^n_t(x/n)\right]\ .
\end{equation}
Moreover, we set $\left\|f\right\|_{n,p}^p:= \langle \cW^n| |f|^p\rangle$ for $p \in [1,\infty)$ (recall $\cW^n$ from~\eqref{eq:Wn}), while we write $\left\|f\right\|_{n,\infty}:=\sup_{x \in \Z^d}\left|f(x/n)\right|$.

  In order to prove the theorem, we decompose the empirical density field $\cX^n_t$, for all $t >0$ and test functions $f \in \cC_c^+(\R^d)$. Recalling the definition of $P_t^n$ from \eqref{eq:Pnt}, we have
\begin{align}\nonumber
\langle \cX^n_t | f\rangle := \frac{1}{n^{d/\beta}}\sum_{x\in \Z^d} \eta_{t\theta_n}(x)\, f(x/n)
\nonumber
&= \frac{1}{n^{d/\beta}} \sum_{x\in \Z^d} \left(\eta_{t\theta_n}(x)-\E_{\eta_0}\left[\eta_{t\theta_n}(x)\right]\right)f(x/n)\\
\nonumber
&\qquad+\frac{1}{n^{d/\beta}}\sum_{x\in \Z^d}\alpha_x \left(\E_{\eta_0}\left[\eta_{t\theta_n}(x)/\alpha_x\right]-\rho^n_t(x/n) \right)f(x/n)\\
\nonumber
&\qquad+\frac{1}{n^{d/\beta}}\sum_{x\in \Z^d}\alpha_x \left( \rho^n_t(x/n)-P^n_t \rho_0(x/n) \right)f(x/n)\\
\nonumber
&\qquad+\frac{1}{n^{d/\beta}}\sum_{x\in \Z^d} \alpha_x\, P^n_t\rho_0(x/n)\, f(x/n)\\
&=: {\rm I}_n+{\rm II}_n+{\rm III}_n+ \langle P^n_t\rho_0\cW^n | f\rangle\ .
\label{eq:decomposition-X}
\end{align}

An analogous decomposition holds for  $\langle\cZ^n_t | f\rangle$. Indeed, recalling the definitions of the fields $\cX^n_t$ and $\cZ^n_t$ in \eqref{eq:density-fields} and \eqref{eq:frequency-fields},  respectively, we readily obtain the following relation between them: for every $\alpha=(\alpha_x)_{x\in \Z^d}$ and $f \in \cC_c^+(\R^d)$, 
\begin{equation}\label{eq:relation-X-Z}
\langle \cZ^n_t|f\rangle = n^{d(1/\beta-1)}\langle \cX^n_t|f^\alpha_n\rangle\ ,
\end{equation}
where
\begin{equation}
f^\alpha_n(x/n):=f(x/n)/\alpha_x\ ,\qquad x \in \Z^d\ .
\end{equation}
Thus,  in view of  \eqref{eq:relation-X-Z} and \eqref{eq:decomposition-X}, we have that 
\begin{align}\label{eq:decomposition-Z}
	\begin{aligned}
\langle \cZ^n_t|f\rangle=n^{d(1/\beta-1)}\big(\tilde{\rm I}_n+\widetilde{\rm II}_n + \widetilde{\rm III}_n\big) + n^{d(1/\beta-1)}\langle P^n_t\rho_0\cW^n| f^\alpha_n\rangle&\\
=: {\rm I}_n^*+ {\rm II}_n^*+{\rm III}_n^* +  n^{d(1/\beta-1)}\langle P^n_t\rho_0\cW^n| f^\alpha_n\rangle&\ , 
\end{aligned}
\end{align} 
where $\tilde{\rm I}_n$ $\widetilde{\rm II}_n$, and $\widetilde{\rm III}_n$ are given as in \eqref{eq:decomposition-X}  with $f^\alpha_n$ in place of $f$.

\smallskip
It now suffices to show the following convergences in distribution (of the environment, jointly with that of the particle system) as $n\to \infty$, for all $t\ge 0$ and $f\in \cC_c^+(\R^d)$:

\smallskip
\noindent
\textbf{Part I.}
 ${\rm I}_n$, ${\rm II}_n$,  ${\rm III}_n$, ${\rm I}_n^*$, ${\rm II}_n^*$, ${\rm III}_n^*$ $\Longrightarrow 0$; 

\smallskip
\noindent
\textbf{Part II(a).} For $d\ge 2$,  $n^{d(1/\beta-1)}\langle P^n_t\rho_0\cW^n| f^\alpha_n\rangle$ $\Longrightarrow$  $\langle \mathcal P_{\kappa t}\rho_0\,{\rm Leb}_{\R^d} |f\rangle$;

\smallskip
\noindent
\textbf{Part II(b).} For $d\ge 2$,  $\langle P^n_t\rho_0\cW^n |f\rangle$ $\Longrightarrow$ $\langle \rho_0\cW |f\rangle$;

\smallskip
\noindent
\textbf{Part II(c).} For $d=1$,  \begin{equation}
	n^{d(1/\beta-1)}\langle P^n_t\rho_0\cW^n| f^\alpha_n\rangle \Longrightarrow  \langle \mathcal P_{\kappa t}\rho_0\,{\rm Leb}_\R |f\rangle\quad\text{and}\quad  \langle P^n_t\rho_0\cW^n |f\rangle\Longrightarrow\langle \mathcal P_{\kappa t}\rho_0\cW|f\rangle\ .\end{equation}
We prove each of these four claims in the following four subsections \S\S\ref{sec:proof-th:main-general1}--\ref{sec:proof-th:main-general2c}.

In the remainder of the paper, we will crucially make use of the stochastic self-duality 
satisfied by the interacting particle system under consideration (see, e.g., \cite{schutz_non-abelian_1994,giardina_duality_2009,redig_sau_2018, floreani_HDL_2021, FloreaniJansenRedigWagner}), which we used to construct the process (Appendix \ref{sec:appendix-second-construction}). In particular, letting 
\begin{equation*}
D(x)= D(x,\eta):=\frac{\eta(x)}{\alpha_x}\quad\text{and}\quad D(x,y)=D((x,y),\eta):=\frac{\eta(x)}{\alpha_x}\frac{\eta(y)-\IND_x(y)}{\alpha_y-\IND_x(y)}\ ;
\end{equation*}
the duality relations in \eqref{eq:dual-rel-k} read as follows: $\Q$-a.s.,  for all $t\ge0$, $\eta \in \Xi$, and  $x, y\in\Z^d$, 
\begin{equation}\label{eq:duality-rel1}\E_{\eta}\left[\eta_t(x)\right]=\alpha_x\, P_tD(x)\ ,
\end{equation}
and
\begin{equation}\label{eq:duality-rel2}\E_{\eta}\left[\eta_t(x)(\eta_t(y)-\IND_x(y))\right]=\alpha_x({\alpha_y-\IND_x(y)})\, P^{(2)}_tD(x,y)\ ,
\end{equation}
where $P_t:=P_t^n$ for $n=1$ denotes the semigroup of ${\rm BTM}(a)$ with generator given in \eqref{eq:generator-BTM}, while $P^{(2)}_t$ the semigroup of the position of two  interacting ${\rm BTM}(a)$ evolving according to the generator given in \eqref{eq:generator-IPS}; the corresponding space-time rescaled versions $P^n_t$ and $P^{(2),n}_t$ are obtained as in \eqref{eq:Pnt}.

\subsection{Proof of Theorem \ref{th:HDL}. Part I}\label{sec:proof-th:main-general1}
\subsubsection{A variance estimate}
We start by proving an auxiliary result. By knowing first and second moments of the empirical density field $\cX^n_t$, and by the negative dependence of the partial symmetric exclusion process, we readily obtain the following variance estimate:
\begin{proposition}\label{pr:variance}
	$\Q$-a.s., for all $\eta \in \Xi$, $t \ge 0$, $n \in \N$, and $f \in \cC^+_c(\R^d)$, 
\begin{equation}\label{eq:negative-dependence}
	\E_\eta\bigg[\bigg( \sum_{x\in \Z^d} \big(\eta_{t\theta_n}(x)-\E_{\eta_0}\left[\eta_{t\theta_n}(x)\right]\big)f(x/n)\bigg)^2\bigg]
	\le
\sum_{x\in \Z^d} \eta(x)\big(  P^n_t(f^2)-(P^n_t f)^2\big)(x/n)\ .	
\end{equation}
\end{proposition}
\begin{proof}In what follows,   $\eta_0=\eta$.
	Writing $\eta_t(x,y):=\eta_t(x)\left(\eta_t(y)-\IND_x(y)\right)$, we get
	\begin{align}\nonumber
		\E_\eta\bigg[\bigg( \sum_{x\in \Z^d} \big(\eta_{t\theta_n}(x)&-\E_{\eta_0}\left[\eta_{t\theta_n}(x)\right]\big)f(x/n)\bigg)^2\bigg]
	\\ &=\sum_{x,y \in \Z^d}f(x/n)f(y/n)\left(\E_\eta\left[\eta_{t\theta_n}(x,y)\right] -\E_\eta\left[\eta_{t \theta_n}(x)\right]\E_\eta\left[\eta_{t\theta_n}(y)\right] \right)\\
	&\qquad + \sum_{x\in \Z^d} f(x/n)^2\, \E_\eta\left[\eta_{t\theta_n}(x)\right]\ .
	\label{eq:var-estimate-step1}
	\end{align}
	Using \eqref{eq:duality-rel1} and the symmetry of $P_t^n$ in $L^2(\cW^n)$,  the second term on the right-hand side of \eqref{eq:var-estimate-step1} equals 	 the term $\sum_{x\in \Z^d} \eta(x)\, P_t^n(f^2)(x/n)$ on the right-hand side of \eqref{eq:negative-dependence}. By \eqref{eq:duality-rel1}--\eqref{eq:duality-rel2} and the symmetry of $P_t$ and $P^{(2)}_t$ with respect to the measure $(\alpha_x)_{x\in \Z^d}$ and $(\alpha(x,y))_{x,y\in \Z^d}:= (\alpha_x\left(\alpha_y-\IND_y(x)\right))_{x,y\in \Z^d}$ on $\Z^d$ and $\Z^d\times \Z^d$, respectively, the first term on the right-hand side of \eqref{eq:var-estimate-step1} reads as follows: 
	\begin{equation}\label{eq:100}
	\begin{aligned}		
		&\sum_{x,y \in \Z^d} f(x/n)f(y/n)\left(\alpha(x,y)\, P_{t\theta_n}^{(2)}D(x,y)-\alpha(x)\alpha(y)\, P_{t\theta_n}D(x) P_{t\theta_n}D(y)\right)\\
		 &=\sum_{x,y\in \Z^d} (P^{(2),n}_t-(P^n_t\otimes P^n_t))(f\otimes f)(x/n,y/n)\, \eta_0(x,y) -\sum_{x \in \Z^d} (P^n_t f(x/n))^2\, \eta_0(x)\ .
	\end{aligned}
	\end{equation}
	Note that the second term on the right-hand side of \eqref{eq:100} is $-\sum_{x\in \Z^d}\eta(x)\,(P^n_t f)^2(x/n)$, i.e., the remaining term on the right-hand of \eqref{eq:negative-dependence}. We conclude by showing that the first term on the right-hand side of \eqref{eq:100} is non-positive, thus, producing the inequality in \eqref{eq:negative-dependence}; for notational convenience, we only show this for $n=1$. Letting $A\oplus A$ and $A^{(2)}$ denote the Markov generators of $P_t\otimes P_t$ and $P^{(2)}_t$, respectively,  by integration-by-parts (arguing as in, e.g., \cite[Ch.\ VIII, Prop.\ 1.7]{liggett_interacting_2005-1}), we get	
	\begin{align*}
	(P^{(2)}_t-(P_t\otimes P_t))&(f\otimes f)(x,y)
	=\int_0^t P^{(2)}_{t-s}\left(A^{(2)}-A\oplus A\right)(P_sf\otimes P_sf)(x,y)\, \dd s\\
	&= \sum_{z,w\in \Z^d} \int_0^t P^{(2)}_{t-s}\IND_{(z,w)}(x,y)\, \left(A^{(2)}-A\oplus A\right) \left(P_sf\otimes P_s f\right)(z,w)\, \dd s\\
	&= -\theta_{n=1}\sum_{z\sim w}\alpha_z^{a-1}\alpha_w^{a-1}\int_0^t \left(P^{(2)}_{t-s}\IND_{\{z,w\}}(x,y)\right) \left(P_s f(z)-P_s f(w)\right)^2\dd s\\
	&\le 0\ ,
	\end{align*}
where the second-to-last step follows by the explicit expressions of $A^{(2)}$ and $A\oplus A$.
	Since $\eta_0(x,y)\ge 0$, we get the desired claim: \begin{equation*}\sum_{x,y\in \Z^d} \big(P^{(2)}_t-P_t\otimes P_t\big)(f\otimes f)(x,y)\, \eta_0(x,y)\le 0\ .
	\end{equation*} This concludes the proof.	
\end{proof}

\subsubsection{Conclusion of {\normalfont{Part I}}}\label{sec:I-II-III-X}
We now prove that ${\rm I}_n, {\rm II}_n, {\rm III}_n, {\rm I}_n^*, {\rm II}_n^*,{\rm III}_n^*\Longrightarrow 0$ as $n\to \infty$. 

\medskip
\noindent
\textit{${\rm I}_n, {\rm I}_n^*\Longrightarrow 0$ (dynamic noise).} Fix $n\in \N$. Then, $\Q$-a.s., by  Proposition \ref{pr:variance}, we get
\begin{equation}\label{eq:ub-variance-In}
\E_{\mu_n}\left[\left({\rm I}_n \right)^2 \right]\le \frac{1}{n^{d/\beta}}\left\{\frac{1}{n^{d/\beta}}\sum_{x\in \Z^d} \E_{\mu_n}\left[\eta_0(x)\right] P^n_t(f^2)(x/n)\right\}
\le \frac{1}{n^{d/\beta}}\, \langle \cW^n| f^2\rangle\ .
\end{equation}
Since $\langle \cW^n|f^2\rangle \Longrightarrow \langle\cW|f^2\rangle$ by \eqref{eq:conv-W} and $n^{-d/\beta}\to 0$,  ${\rm I}_n$ vanishes in distribution.
 By \eqref{eq:ub-variance-In}, we also  obtain
\begin{equation}
\E_{\mu_n}\left[\left({\rm I}_n^*\right)^2\right]\le \frac{n^{2d(1/\beta-1)}}{n^{d/\beta}}\langle \cW^n|(f^\alpha_n)^2\rangle=\frac{1}{n^{2d}}\sum_{x\in \Z^d}\frac{f(x/n)^2}{\alpha_x}\ ,
\end{equation}
and, since $\alpha_x\ge 1$ for $\Q$-a.e.\ $\alpha$, the right-hand side vanishes as $n\to \infty$.

\medskip
\noindent
\textit{${\rm II}_n, {\rm II}_n^*\Longrightarrow 0$ (initial conditions' variance).} Fix $n \in \N$. Then, for $\Q$-a.e.\ $(\alpha_x)_{x\in \Z^d}$, we have, by symmetry of $P^n_t$ with respect to $(\alpha_x)_{x\in \Z^d}$ and the first-order duality relation	 \eqref{eq:duality-rel1}, 
\begin{equation*}
{\rm II}_n= \langle \cW^n| \left(D^n_0-\rho^n_0\right) P^n_t f\rangle \ .
\end{equation*}
The explicit form of $\mu_n$ (see \eqref{eq:product-local-eq}) yields, $\Q$-a.s., 
\begin{align}\label{eq:prod-measure-use}
\E_{\mu_n}\left[\left({\rm II}_n\right)^2\right]\le \frac{1}{n^{d/\beta}}\left\|P^n_t f\right\|_{n,2}^2	\le \frac{1}{n^{d/\beta}} \left\|f\right\|_{n,2}^2\ ,
\end{align}
where for the last step we used  $\left\|P^n_t f\right\|_{n,2}\le \left\| f\right\|_{n,2}$ (an immediate consequence of  Jensen inequality and the symmetry of $P_t^n$ on $L^2(\cW^n)$).	
Since $f\in \cC_c^+(\R^d)$, \eqref{eq:conv-W}  ensures that $ \left\|f\right\|_{n,2}^2\Longrightarrow \langle \cW|\left|f\right|^2\rangle$.  Therefore,   ${\rm II}_n$ vanishes in distribution as $n\to \infty$.	
Similarly, we have
\begin{equation}
		\E_{\mu_n}\left[\left({\rm II}_n^*\right)^2\right]\le \frac{n^{2d(1/\beta-1)}}{n^{d/\beta}}\norm{f_n^\alpha}_{2,n}^2 = \frac{1}{n^{2d}}\sum_{x\in \Z^d} \frac{f(x/n)^2}{\alpha_x}\xrightarrow{n\to \infty}0\ ,\qquad \Q\text{-a.s.}\ .
\end{equation}

\medskip
\noindent
\textit{${\rm III}_n, {\rm III}_n^*\Longrightarrow 0$ (initial conditions' mean).}	Fix $n \in \N$. Then,  by duality \eqref{eq:duality-rel1} and symmetry of $P_t^n$ in $L^2(\cW^n)$, we obtain, $\Q$-a.s.,
\begin{equation*}
{\rm III}_n= \langle \cW^n| \left(\rho^n_0-\rho_0\right)P^n_tf\rangle\ .
\end{equation*}
Since $\mu_n=\otimes_{x\in \Z^d}{\rm Bin}(\alpha_x,\rho_0(x/n))$, 	 we have \begin{equation}\label{eq:prod-measure-use-2}\rho_0^n\equiv \rho_0\ ,	
\end{equation} and, thus,  the terms ${\rm III}_n$ and ${\rm III}_n^*$ are both $\Q$-a.s.\  identically equal to zero.

\smallskip
 This concludes  Part I of the proof of Theorem \ref{th:HDL}.

\begin{remark}\label{remark: quenched vanish noise2}
	By the argument above, the three terms ${\rm I}_n^*$, ${\rm II}_n^*$ and ${\rm III}_n^*$ even vanish for $\Q$-a.e.\ realization of the environment.
\end{remark}

\begin{remark}[Product measure as an initial condition]\label{rem:prod-measure}
		The choice of $\mu_n$ in \eqref{eq:product-local-eq} as the starting
		 distribution is not essential for our proof of Theorem \ref{th:HDL} to hold. Indeed, the first inequality in \eqref{eq:prod-measure-use} and the identity in \eqref{eq:prod-measure-use-2}  are the only two instances in which the  form of the initial distribution plays a role. These two properties are clearly not a prerogative of product measures, and other initial measure satisfying them (or, possibly, some quantitatively relaxed variants of them) would equally work. 		
\end{remark}

\subsection{Proof of Theorem \ref{th:HDL}. Part II(a)}\label{sec:proof-th:main-general2a} We now prove that, for $d\ge 2$ and  some $\kappa=\kappa(\Q,\beta,a,d)>0$,   \begin{equation}\label{eq:proof-part-II-a}n^{d(1/\beta-1)}\langle P^n_t\rho_0\cW^n| f^\alpha_n\rangle\underset{n\to \infty}\Longrightarrow\langle \mathcal P_{\kappa t}\rho_0\,{\rm Leb}_{\R^d} |f\rangle\ .
	\end{equation}
 Combined with Part I, \eqref{eq:proof-part-II-a} yields the  claim of Theorem \ref{th:HDL} for the field $\cZ_t^n$ for $d\ge 2$.

Also for this proof, we need a preliminary result. This is the content of the following proposition, which was obtained in \cite[Prop.\ 2.4]{Chiarini_Flo_sau} as a consequence of  the quenched invariance principle for ${\rm BTM}(a)$, $a\in [0,1]$, when starting from the origin (see \cite[Thm.\ 1.3]{BarlowCerny} for $d\ge 3$, and \cite[Thm.\ 1.2]{cernyEJP} for $d=2$) and the ergodic theorem.

\begin{proposition}[{\cite[Prop.\ 2.4]{Chiarini_Flo_sau}}]\label{proposition: l1 quenched conv sem} There exists $\kappa=\kappa(\Q,\beta,a,d)>0$ satisfying, for $\Q$-a.e.\ $\alpha$ and for all $t>0$,  compact sets $K\subset \R^d$, and uniformly continuous  bounded functions $g:\R^d\to \R$,
\begin{align}
\lim_{n\to \infty}\frac{1}{n^d}\sum_{\substack{x\in \Z^d\\
		x/n\in K}} \left| P^n_t g(x/n)- \cP_{\kappa t} g(x/n) \right|=0\ .
\end{align}
\end{proposition}

	In conclusion, 
 the last term in \eqref{eq:decomposition-Z} reads as
\begin{align*}
n^{d(1/\beta-1)}\langle P^n_t\rho_0\cW^n| f^\alpha_n\rangle&= \frac{1}{n^d}\sum_{x\in \Z^d} P^n_t \rho_0(x/n)\, f(x/n)\ ,
\end{align*}
which clearly converges, by Proposition \ref{proposition: l1 quenched conv sem},  the continuity of $\cP_{\kappa t}\rho_0$, and $f\in \cC_c^+(\R^d)$, to the desired quantity:
\begin{align}\label{eq:last-step}
\frac{1}{n^d}\sum_{x\in \Z^d} P^n_t \rho_0(x/n)\, f(x/n)\underset{n\to \infty}\Longrightarrow \int_{\R^d} \cP_{\kappa t} \rho_0(x)\, f(x)\, \dd x\ .
\end{align}
This concludes the proof of \textbf{Part II(a)}. 

\begin{remark}[Quenched results]\label{remark: quenched vanish noise}
	Remark \ref{remark: quenched vanish noise2} and Proposition \ref{proposition: l1 quenched conv sem} ensure that the convergence of $\cZ^n_t$ in Theorem~\ref{th:HDL} can be upgraded to a quenched one, ensuring the validity of the claim in \eqref{eq:quenched-result}. 
\end{remark}

\subsection{Proof of Theorem \ref{th:HDL}. Part II(b)}\label{sec:proof-th:main-general2b}
This section is devoted to the proof of the following result.
\begin{proposition} Let $d\ge 2$, 
	 $\beta\in (0,1)$ and $a\in [0,1]$. Then, for all $t\ge 0$ and $f\in \cC_c^+(\R^d)$,
	\begin{align}\label{eq:correct1}
	\frac{1}{n^{d/\beta}}\sum_{x\in \Z^d} f(x/n)\, P^n_t\rho_0(x/n)\, \alpha_x \underset{n\to \infty}\Longrightarrow \langle \cW|f \rho_0\rangle\ .
	\end{align}
\end{proposition}
\begin{proof}
	The first step consists in showing that, for what concerns the summation on the left-hand side of \eqref{eq:correct1}, the terms which contribute to the limit are only those with weights $\alpha_x$'s large as $(1+o(1))n^{d/\beta}$. In other words, terms corresponding to	 strictly smaller weights do not contribute to the limit.	
	
	The proof of this fact is rather standard and goes as follows. Consider a sequence $(r_n)_{n\in \N}$ satisfying $r_n\to \infty$ and $r_n=o(n^{d/\beta})$.  For the next step, we shall further require that  $\theta_n=o(r_n)$.	Then, for $\Q$-a.e.\ $\alpha$ and all $n\in \N$,  we split the summation as follows:
	\begin{align}\nonumber
	\frac1{n^{d/\beta}}\sum_{x\in \Z^d} f(x/n)\, P^n_t\rho_0(x/n)\, \alpha_x&= \frac1{n^{d/\beta}}\sum_{x\in \Z^d} f(x/n)\, P^n_t\rho_0(x/n)\, \alpha_x\, \IND_{[r_n,\infty)}(\alpha_x)\\
	\nonumber
	&\quad+
	\frac1{n^{d/\beta}}\sum_{x\in \Z^d} f(x/n)\, P^n_t\rho_0(x/n)\, \alpha_x\, \IND_{[0,r_n)}(\alpha_x)\\
	\label{eq:JK}
	&=: \cJ^n_t+\cK^n_t\ .	
	\end{align} 
	We claim that $\cK^n_t\ge 0$ goes to zero in $\Q$-probability, as $n\to \infty$. Indeed, for every $\varepsilon > 0$, Markov inequality yields 
	\begin{align}\label{eq:markov1}
	\Q\big(\cK^n_t>\varepsilon\big)\le \frac{\E_\Q\left[\cK_t^n\right]}{\varepsilon}	 \le \frac{C(f)}{\varepsilon} \frac{n^d}{n^{d/\beta}}\,\E_\Q[\alpha_0\, \IND_{[0,r_n)}(\alpha_0)]\ ,
	\end{align}
	where the last step is a simple consequence of non-negativity of $f, \rho_0$, the fact that $f$ has compact support and is bounded, and that $P^n_t \rho_0\le 1$. It remains to estimate the expectation on the right-hand side of \eqref{eq:markov1}: for some constants $c,c'>0$, 
	\begin{align}
	\E_\Q[\alpha_0\, \IND_{[0,r_n)}(\alpha_0)] &= \int_0^{r_n}\Q\big(\alpha_0>u\big){\rm d}u \\&=1+\int_1^{r_n}\Q\big(\alpha_0>u\big){\rm d}u
	\le1+ c\int_1^{r_n} u^{-\beta}\,{\rm d}u
	 \le c'\, {r_n}^{1-\beta}\ .
	\end{align}
	Combining this estimate with the one in \eqref{eq:markov1}, we obtain, for every $\varepsilon > 0$,
	\begin{align*}
	\Q\big(\cK^n_t>\varepsilon\big) \le \frac{C'}{\varepsilon} n^{d-d/\beta}r_n^{1-\beta} = \frac{C'}{\varepsilon} \big(n^{-d/\beta}r_n\big)^{1-\beta} = o(1)\ ,\qquad \text{as}\ n\to \infty\ ,
	\end{align*} where the last step follows by the assumption $r_n=o(n^{d/\beta})$ and $\beta<1$.	
	
	As a consequence of the above step, the limit in \eqref{eq:correct1} is determined by that of $\cJ^n_t$, which we recall to be given by
	\begin{equation*}
	\cJ^n_t=	\frac1{n^{d/\beta}}\sum_{x\in \Z^d} f(x/n)\, P^n_t\rho_0(x/n)\, \alpha_x\, \IND_{[r_n,\infty)}(\alpha_x)\ .	
	\end{equation*}
	The argument for the symmetric case ($a=0$) is the easiest one. We start by observing that,  $\Q$-a.s., if $x\in \Z^d$ is such that $\alpha_x\ge r_n$, then, since $\rho_0\in [0,1]$,
	\begin{align*}
	\left|P^n_t\rho_0(x/n)-\rho_0(x/n)\right| &\le \mathbf P^\alpha_x\big(X_{t\theta_n}\neq x\big)\\ &\le 1-\mathbf P^\alpha_x\big(X_s =x\,,\, \forall\,s \in [0,t\theta_n] \big)
	= 1- e^{-t\theta_n/r_n} \xrightarrow{n\to \infty}0\ ,
	\end{align*}
	where for the last step we used $\theta_n=o(r_n)$.
	Henceforth, 
	\begin{align}
	\cJ^n_t= \frac{1}{n^{d/\beta}}\sum_{x\in \Z^d}f(x/n)\, \rho_0(x/n)\, \alpha_x\, \IND_{[r_n,\infty)}(\alpha_x) + o_\Q(1) = \cJ^n_0+o_\Q(1)\ ,
	\end{align}
	where $o_\Q(1)$ denotes a random variable which vanishes in $\Q$-probability as $n\to \infty$. This completes the proof of the proposition in the case $a=0$.

	As for the asymmetric case $a\in (0,1]$, we need to complement the information that $\alpha_x$ is large with the information that, with  $\Q$-probability going to one as $n\to \infty$, close to $x$, we observe only small weights.
	For this purpose, fix $a\in (0,1]$, and define 
	\begin{equation}\label{eq:rn}
	r_n = n^{d/\beta} \log(1+n)^{\frac14\left(1-1/\beta\right)}\ ,
\end{equation}
	which, recalling the definition of $\theta_n$ in \eqref{eq:theta-n} and that $\beta\in (0,1)$, clearly satisfies $r_n\to \infty$, $r_n=o(n^{d/\beta})$ and $\theta_n = o(r_n)$  for all $d\geq2 $. 
	We further introduce a second sequence $s_n\to \infty$, to be  defined as
	\begin{equation}\label{eq:sn-kn}
		s_n = \log(1+n)^{-\frac13\left(1-1/\beta\right)}\ .	
	\end{equation}
With this choice, note that, for all $d\ge 2$,	
\begin{equation}\label{eq:sn-bounds}
	n^{d/\beta}= o(r_ns_n)\ ,
\end{equation}
and
\begin{equation}\label{eq:sn-bounds2}
 \frac{\theta_n s_n^2}{r_n} = O(\log(1+n)^{\frac1{12}\left(1-1/\beta\right)})=o(1)\ 	 .
\end{equation}
Now, define, for all $\alpha$,
	\begin{equation*}
		R_n=R_n(\alpha,f):=\{x\in \Z^d\mid x/n \in {\rm supp}(f)\,,\, \alpha_x\ge r_n\}\subset \Z^d\ .
	\end{equation*}
	We wish to show that all weights at a finite distance from $R_n$ are larger than $s_n$ with vanishing $\bbQ$-probability. With this in mind, we introduce 
	\begin{equation*}
	\cS_n=\cS_n(\alpha,f):= \left\{\max\{\alpha_y\mid 	y \in \Z^d\setminus R_n\,,\, {\rm dist}(y,R_n)\le 2\}> s_n\right\}\ ,
	\end{equation*}
	and we see, by a union bound,  that 
	\begin{align}\label{eq:cSn}
	\Q\big(\cS_n\big)\le C(f)\,n^d\, \Q\big(\alpha_0\ge r_n\,,\, \alpha_y>s_n\ \text{for some}\ 1\le |y|\le 2\big)\le C'\frac{n^d}{r_n^\beta s_n^\beta} =o(1)\ ,
	\end{align} 
where the last inequality is a consequence  of our hypotheses on the $\alpha$'s (i.e., independence and \eqref{eq:heavy-tail}), whereas the last identity follows from \eqref{eq:sn-bounds}.
	We can now conclude. Indeed, for $\alpha\in  \cS_n^c$, letting
	$x\in R_n$ and \begin{equation*}U(x):=\{y\in \Z^d\mid 1\le |y-x|\le 2\}\ ,
		\end{equation*}
	we have that, for some  constants $c_1,c_2,c_3>0$ (independent of $\alpha$):
	\begin{enumerate}
		\item[(i)] the jumps from $x$ to $U(x)$ are  stochastically dominated  by an exponential random variable of rate $c_1\, r_n^{a-1}s_n^a$;	
		\item[(ii)] the probability that, right after exiting $x$, the next jump is not back to $x$ is smaller than $\frac{c_2\,s_n^a}{c_2\,s_n^a+r_n^a}\le c_3 (s_n/r_n)^a$.
	\end{enumerate}	
Therefore, for all $\alpha\in \cS_n^c$,   claim (ii) ensures that the number of attempts required to perform a jump from a neighbor of $x\in R_n$ to another vertex in $U(x)$ (thus, different from $x\in R_n$) is stochastically dominated by a geometric random variable of success probability $c_3(s_n/r_n)^a$. Moreover, for all $\alpha \in \cS_n^c$,  claim  (i) ensures
that the number of times that a random walk started at $x\in R_n$ exits $x$ (thus, visiting a neighbor in $U(x)$) in the interval $[0,t\theta_n]$ is stochastically dominated by a Poisson random variable of parameter $c_1t\theta_n r_n^{a-1}s_n^a$. Since these two mechanisms are independent, we may apply the Markov inequality so to obtain, for some $C=C(t,c_1,c_3)>0$ and all $\alpha \in \cS_n^c$, 
\begin{equation}
	 \max_{x \in R_n}\mathbf P^\alpha_x\bigg(\sup_{s\in [0,t\theta_n]}|X_s-x|> 2\bigg)\le C\, \theta_n \frac{s_n^{2a}}{r_n}\le C\, \theta_n\frac{s_n^2}{r_n}\xrightarrow{n\to \infty} 0\  ,
\end{equation} 
where the last step is a consequence of \eqref{eq:sn-bounds2}. Now, recall that $\rho_0$ is uniformly continuous and bounded $\rho_0\in [0,1]$. Further, let $w_{\rho_0}(\delta)$ denote the $\delta$-modulus of continuity of $\rho_0$. Then, by the  aforementioned properties of $\rho_0$ and the above display, we get, uniformly over $\alpha \in \cS_n^c$, 	
\begin{equation}
	\begin{aligned}
	\Psi_n:=	\max_{x\in R_n}|P^n_t \rho_0(x/n) -\rho_0(x/n)| &\leq \max_{x\in R_n} \bfE^\alpha_x[|\rho_0(X_{t\theta_n}/n) - \rho_0(x/n)|]\\
		& \leq \max_{x\in R_n} \bfP^\alpha_x\bigg(\sup_{s\in [0,t\theta_n]}|X_s-x|>2\bigg) + w_{\rho_0}(2/n)\xrightarrow{n\to \infty}0\ .
	\end{aligned}
\end{equation}
  After observing that
	\begin{align*}
	\abs{\cJ^n_t-\cJ^n_0}&\le \frac{1}{n^{d/\beta}}\sum_{x\in \Z^d}f(x/n)\,|P^n_t \rho_0(x/n)-\rho_0(x/n)|\,\alpha_x\, \IND_{[r_n,\infty)}(\alpha_x)\\
	&\le \Psi_n\,
	\frac{1}{n^{d/\beta}}\sum_{x\in \Z^d}f(x/n)\,  \alpha_x = \Psi_n\, \langle \cW^n|f\rangle\ ,
	\end{align*}
	we get, for all $\varepsilon, \delta> 0$, 
	\begin{align*}
	\Q\big(|\cJ^n_t-\cJ^n_0|>\varepsilon)&\le \Q\big(\{\Psi_n\langle \cW^n|f\rangle>\varepsilon\}\cap \cS_n^c\big)+\Q\big(\cS_n\big)\\
	&=\Q\big(\{\Psi_n>\delta\}\cap \cS_n^c\big)+\Q\big(\langle \cW^n|f\rangle >\varepsilon/\delta\big)+\Q\big(\cS_n\big)\ .
	\end{align*}
For every $\varepsilon > 0$, the third term on the right-hand side above vanishes, for all $\delta > 0$, as $n\to \infty$, by \eqref{eq:cSn}; the second term vanishes by the convergence in \eqref{eq:conv-W} and by first taking $n\to \infty$ and then $\delta \to 0$; the first term vanishes, for all $\delta > 0$, as $n\to \infty$, 
 because $\lim_{n\to \infty}\sup_{\alpha \in \cS_n^c}\Psi_n= 0$. This concludes the proof of the proposition.
\end{proof}

\subsection{Proof of Theorem \ref{th:HDL}. Part II(c)}\label{sec:proof-th:main-general2c}

Recall that, in this section, we deal with the one-dimensional case. Here, we provide an alternative construction of the underlying randomness, which turns out to be useful for what follows. Here, we write $\overset{{\rm Law}}=$ for equality in law.

\begin{proposition}\label{pr:coupling} Let $d=1$. There exists a coupling $(\bar \cW, (\bar{ \mathcal P}_t)_{t\ge 0}, (\bar\cW^n)_n, ((\bar P^n_t)_{t\ge 0})_n)$ with law $\bar \Q$ such that:
	\begin{enumerate}
		\item \label{it:coupling1} 
		$\bar \cW\overset{{\rm Law}}=\cW$, 	$\bar \cW^n\overset{{\rm Law}}=\cW^n$, 	$(\bar \cP_t)_{t\ge 0}\overset{{\rm Law}}=(\cP_t)_{t\ge 0}$, 	$(\bar P_t^n)_{t\ge 0}\overset{{\rm Law}}=(P_t^n)_{t\ge 0}$;  in particular,  the following analogue of \eqref{eq:conv-W}  holds:
		\begin{equation}\label{eq:conv-W-bar}
		\langle \bar \cW^n| f\rangle \underset{n\to \infty}\Longrightarrow \langle \bar \cW|f \rangle\ ,\qquad f \in \cC_c^+(\R)\ ;	
		\end{equation}
		\item \label{it:coupling2} for all $g\in \cC_b(\R)$,   $\bar {\mathcal P}_tg\bar \cW  \overset{{\rm Law}}=\mathcal P_tg\cW  $, and similarly $\bar P^n_tg\bar \cW^n\overset{{\rm Law}}=P^n_tg\cW  $;
		\item \label{it:coupling1.5} $\bar \Q$-a.s.,  for all $g\in \cC_b(\R)$ and $t\ge 0$, $\bar \cP_t g:\R\to \R$ is continuous and bounded;
		\item \label{it:coupling4} $\bar \Q$-a.s., for all $g\in \cC_b(\R)$ and compact sets $K\subset \R$, $\left\|\IND_K\left(\bar P^n_t g- \bar{\mathcal P}_{\kappa t} g\right)\right\|_{n,\infty}\to 0$.
		\item \label{it:coupling3} $\bar \Q$-a.s., $\bar \cW^n\to \bar \cW$ in $\cM_v(\R)$.
	\end{enumerate}
\end{proposition}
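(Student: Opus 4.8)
The plan rests on the structural observation that \emph{every} object in the list, with the sole exception of the Poisson limit $\cW$, is a deterministic measurable functional of the trapping environment $\alpha=(\alpha_x)_{x\in\Z^d}$: the measure $\cW^n$ of \eqref{eq:Wn} is such a functional and, for each fixed $n$, $\alpha$ is in turn recovered from it via $\alpha_x=n^{d/\beta}\cW^n(\{x/n\})$; the semigroup $P^n_t$ is the space--time rescaled $\mathrm{BTM}(a)$ semigroup in $\alpha$; for $d\ge2$ the limit $\mathcal P_t$ does not depend on $\alpha$, whereas for $d=1$ it is the FIN semigroup of $\cW$, hence a functional of $\cW$. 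Thus, once $\alpha$ and $\cW$ are coupled with the correct respective laws, the distributional identities in items~\eqref{it:coupling1}--\eqref{it:coupling2} hold automatically (the law of a functional depends only on the law of its argument), and the real content is to arrange the almost sure statements~\eqref{it:coupling4} (and, for $d=1$, \eqref{it:coupling3}) together with the regularity~\eqref{it:coupling1.5}. I would split the argument according to dimension.

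For $d\ge2$ I would take the trivial coupling $\bar\Q:=\Q\otimes\mathrm{Law}(\cW)$ on the product space, with $\bar\cW^n$ and $\bar P^n_t$ the above functionals of the $\Q$-coordinate, $\bar\cW$ the other coordinate, and $\bar{\mathcal P}_t:=\mathcal P_t$. Items~\eqref{it:coupling1}--\eqref{it:coupling2} are then immediate, with \eqref{eq:conv-W-bar} being exactly \eqref{eq:conv-W}. For~\eqref{it:coupling1.5} I would use that $\mathcal P_tg(x)=E\!\left[g\big(x+B(V_\beta^{-1}(t))\big)\right]$ for $t>0$ (and $\mathcal P_0g=g$), which is bounded by $\|g\|_\infty$ and continuous in $x$ by dominated convergence. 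Finally,~\eqref{it:coupling4} is exactly Proposition~\ref{pr:ASPQIP} transported to $\bar\Q$ (whose $\alpha$-marginal is $\Q$), i.e.\ \eqref{eq: sup K semigroup convergence}; the a.s.\ event there depends a priori on $(g,K)$, but it is upgraded to a single full-measure event valid for all $g\in\cC_b(\R^d)$ and all compact $K\subset\R^d$ by intersecting over the exhaustion $K\in\{[-m,m]^d\}_m$ and over a countable family of test functions, using that $P^n_t$ and $\mathcal P_{\kappa t}$ are $\|\cdot\|_\infty$-contractions; here $\kappa$ is the constant of Proposition~\ref{pr:ASPQIP}.

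For $d=1$, where $\mathcal P_t$ is the random FIN semigroup, I would combine the Skorokhod representation theorem (e.g.\ \cite{billingsley_convergence_1999}) with the convergence theory for one-dimensional generalized diffusions. Since $A$ in \eqref{eq:generator-BTM} is symmetric in $L^2(\cW^1)$, on $\Z$ the model $\mathrm{BTM}(a)$ is the time change with speed measure $(\alpha_x)_x$ of the nearest-neighbour conductance walk with conductances $\alpha_x^a\alpha_{x+1}^a$; its rescaled scale function $S^n(y):=\tfrac1n\sum_{0\le x<\lfloor ny\rfloor}\alpha_x^{-a}\alpha_{x+1}^{-a}$ (and symmetrically for $y<0$) is a deterministic functional of $\cW^n$, and, since $\alpha_x^{-a}\alpha_{x+1}^{-a}\le1$ defines a bounded stationary ergodic sequence, the strong law of large numbers gives $S^n\to\kappa_{\mathrm s}\,\mathrm{id}$ locally uniformly, $\Q$-a.s., with $\kappa_{\mathrm s}:=\big(\E_\Q[\alpha_0^{-a}]\big)^2\in(0,\infty)$. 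Hence $(\cW^n,S^n)\Rightarrow(\cW,\kappa_{\mathrm s}\,\mathrm{id})$ in $\cM_v(\R)\times\cC(\R)$ (joint convergence being automatic, the second limit deterministic), and Skorokhod's theorem produces $(\bar\Omega,\bar\Q)$ carrying $(\bar\cW^n,\bar S^n)\overset{d}{=}(\cW^n,S^n)$ and $\bar\cW\overset{d}{=}\cW$ with $(\bar\cW^n,\bar S^n)\to(\bar\cW,\kappa_{\mathrm s}\,\mathrm{id})$ $\bar\Q$-a.s.; as $(\cW^n,S^n)$ is supported on the graph of the functional $\cW^n\mapsto S^n$, so is its copy, i.e.\ $\bar S^n$ is that same functional of $\bar\cW^n$. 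I would then recover $\bar\alpha^{(n)}_x:=n^{1/\beta}\bar\cW^n(\{x/n\})$, define $\bar P^n_t$ as the rescaled $\mathrm{BTM}(a)$ semigroup in $\bar\alpha^{(n)}$, and $\bar{\mathcal P}_t$ as the FIN semigroup of $\bar\cW$. Items~\eqref{it:coupling1}--\eqref{it:coupling2} again follow from the structural observation, item~\eqref{it:coupling3} is the a.s.\ convergence just obtained, and for~\eqref{it:coupling1.5} I would invoke that $\bar\cW$ has full support on $\R$ $\bar\Q$-a.s., so the associated FIN diffusion admits a transition density, jointly continuous in its space variables, with respect to $\bar\cW$ (see, e.g., \cite{FIN02,croydon2017time}), whence $\bar{\mathcal P}_tg$ is bounded and continuous for all $g\in\cC_b(\R)$, $t\ge0$. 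For~\eqref{it:coupling4} I would apply Stone's convergence theorem for one-dimensional gap diffusions on the full-measure event of~\eqref{it:coupling3}: there the rescaled scale functions tend to $\kappa_{\mathrm s}\,\mathrm{id}$ and the rescaled speed measures to $\bar\cW$, so the rescaled $\mathrm{BTM}(a)$ converges to the time change of Brownian motion with speed $\bar\cW$, i.e.\ to the FIN diffusion of $\bar\cW$ run at a deterministic speed $\kappa=\kappa(\Q,\beta,a)>0$, and the convergence is locally uniform in the starting point, which is precisely $\big\|\IND_K\big(\bar P^n_tg-\bar{\mathcal P}_{\kappa t}g\big)\big\|_{n,\infty}\to0$.

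I expect the main obstacle to lie in the $d=1$ case, on two counts. First, one must realise that the vague convergence $\cW^n\Rightarrow\cW$ of the empirical trap measures is \emph{not} by itself enough to force the scaling limit: the bulk quantity $\sum\alpha_x^{-a}\alpha_{x+1}^{-a}$ is invisible to the vague limit (which only records the atypically large traps), so the rescaled scale function has to be carried along the coupling and Skorokhod-represented jointly with $\cW^n$; since it has a deterministic limit this costs nothing distributionally, but it has to be noticed. Second, one must quote the one-dimensional limit theorem in the strong form needed for~\eqref{it:coupling4}, namely convergence at the level of the semigroups, locally uniformly in the initial condition, rather than mere convergence of finite-dimensional distributions or of path laws from a fixed starting point. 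A lesser point is the regularity~\eqref{it:coupling1.5} in the case $d=1$, which genuinely relies on the full-support property of $\bar\cW$ and the continuity of the FIN transition density.
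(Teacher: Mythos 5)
Your proposal is correct in substance, and for $d\ge 2$ it is essentially the paper's proof: the same trivial product coupling $\Q\otimes\mathrm{Law}(\cW)$, with item \eqref{it:coupling4} read off from Proposition \ref{pr:ASPQIP}; your dominated-convergence argument for item \eqref{it:coupling1.5} via the representation $\cP_tg(x)=E[g(x+B(V_\beta^{-1}(t)))]$ is a perfectly good (arguably more elementary) substitute for the paper's appeal to the heat kernel estimates of Chen--Kim--Kumagai--Wang. One small caveat there: your proposed upgrade of \eqref{eq: sup K semigroup convergence} to all $g\in\cC_b(\R^d)$ by intersecting over a countable family plus the contraction property does not quite work as stated, because $\cC_b(\R^d)$ is not separable in the uniform norm; it is also unnecessary, since the full-measure event in Proposition \ref{pr:ASPQIP} is the validity of the quenched invariance principle for $(S^n,Y^n)$ from all converging starting points, which does not depend on $g$ or $K$ in the first place. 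Where you genuinely diverge from the paper is $d=1$: the paper simply imports the explicit Ben Arous--\v{C}ern\'y coupling, in which $\bar\cW$, the $\bar\cW^n$, the tilted scale functions $\bar S_n$ and the semigroups are all built from one Poisson randomness --- so item \eqref{it:coupling3} holds by construction --- and then quotes their Proposition 6.2 for the semigroup convergence. You instead construct the coupling abstractly, by Skorokhod-representing the pair (speed measure, rescaled scale function), observing that the scale function has a deterministic LLN limit $\kappa_{\mathrm s}\,\mathrm{id}$ and is a functional of $\cW^n$, and then invoking Stone's theorem for gap diffusions. This buys you an explicit treatment of the asymmetric case $a\in(0,1]$ (the scale-function normalisation $(\E_\Q[\alpha_0^{-a}])^2$, which the paper glosses over since Ben Arous--\v{C}ern\'y work with $a=0$ and the paper only asserts that their argument ``generalizes''), at the price of having to quote Stone's theorem in a semigroup form; as in the paper, the locally uniform convergence of item \eqref{it:coupling4} is then not what Stone's theorem gives directly but follows from convergence along all converging starting points together with the continuity of $\bar\cP_{\kappa t}g$ from item \eqref{it:coupling1.5}. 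Both routes rest on the same one-dimensional generalized-diffusion convergence theory, and both are sound.
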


\begin{proof}
One possible coupling is the one given in \cite[\S\S1--6]{BenArousCerny05}. Namely, first  introduce the random locally finite measure $\bar \cW$ (\cite[p.\ 1172]{BenArousCerny05}). Conditional on $\bar \cW$, define the Markov process $(\bar Z_t(x))_{t\ge 0,x\in \R}$, with $\bar Z_0(x)=x$, as in \cite[Def.\ 1.1]{BenArousCerny05} (note that there is no problem in changing the starting point of the process, see, e.g., the end of \cite[p.\ 1167]{BenArousCerny05}); to such a Markov process, associate its Markov semigroup $\bar{\mathcal P}_t$. By item (iv) in \cite[Prop.\ 3.2]{ben-arous_cerny_dynamics_2006}, $\bar {\mathcal P}_t\rho_0:{\rm supp}(\bar \cW)\subset \R\to \R$ admits a unique continuous and bounded extension (see also \cite{croydon2019heat}), which, by a slight abuse of notation, we keep calling $\bar{\mathcal P}_t$; this yields item \eqref{it:coupling1.5}. From the randomness used to generate $\bar \cW$, construct, for every $n \in \N$, the random speed measure $\bar \cW^n$, the random scale function $\bar S_n$,  and the semigroups $\bar P^n_t$ and $\bar P^{n,\bar S_n}_t$ corresponding to the Markov jump processes evolving on $\Z_n:=\frac{\Z}{n}$ and on the tilted support $\bar S_n(\Z_n)\subset \R$ as in \cite[p.\ 1172--3]{BenArousCerny05}. Such definitions immediately yield items \eqref{it:coupling1}, \eqref{it:coupling2}, and \eqref{it:coupling3} of the proposition. Finally, by 
	the first claim in  \cite[Prop.\ 6.2]{BenArousCerny05}  --- which generalizes as, e.g., in item (iv) of \cite[Prop.\ 2.5]{BenArousCerny05},
	%
	%
	we get: conditional on $\bar \cW$, for all $t >0$, $x \in \R$, and for all $(x_n)_n$, $x_n \in \Z$, and $x_n/n\to x$, 
	\begin{equation}\label{eq:conv-1}
	\bar P^n_t g(x_n/n)\xrightarrow{n\to \infty}\bar {\mathcal P}_{\kappa t} g(x)\ ,\qquad \kappa=1\ .
	\end{equation}
	By the space-continuity of $\bar {\mathcal P}_{\kappa t} g$, \eqref{eq:conv-1} is actually equivalent to item \eqref{it:coupling4} of the proposition, thus, concluding the proof.
\end{proof}
We are now ready to conclude the proof of this step of the proof of Theorem \ref{th:HDL}. Indeed, the following convergence in distribution
\begin{align}n^{1/\beta-1}\langle P^n_t\rho_0\cW^n| f^\alpha_n\rangle
 = \frac1n\sum_{x\in \Z} P_t^n\rho_0(x/n)\, f(x/n)\underset{n\to \infty}\Longrightarrow \int_\R \cP_{\kappa t}\rho_0(x)\, f(x)\, \dd x = \langle \cP_{\kappa t}\rho_0\,{\rm Leb}_{\R}|f\rangle
	\end{align} readily follows from the fact that $f\in \cC_c^+(\R)$ and the above proposition. This proves the desired claim involving the field $\cZ_t^n$. As for the density field $\cX_t^n$, it suffices to show that
\begin{equation}\label{eq:conv-bar}
P^n_t \rho_0\cW^n  \underset{n\to \infty}\Longrightarrow {\mathcal P}_{\kappa t}\rho_0\cW  \ \quad \text{in}\ \cM_v(\R)\ ,
\end{equation}
which,  in view of Proposition \ref{pr:coupling}\eqref{it:coupling2}, 
is  a consequence of the following result.
\begin{lemma} For $d= 1$, with the same notation as in Proposition \ref{pr:coupling}, the following convergence 
	$
	\bar P^n_t \rho_0	\bar \cW^n  \underset{n\to \infty}\Longrightarrow  \bar{\mathcal  P}_{\kappa t} \rho_0\bar \cW $ in $\cM_v(\R)$
 holds  $\bar \Q$-a.s.
\end{lemma}

\begin{proof}
We divide the proof of the lemma into steps.
	\begin{enumerate}
		\item By the triangle inequality, we get
		\begin{align*}
		&	\left|	\langle \bar P^n_t \rho_0\bar \cW^n | f\rangle-\langle  \bar {\cP}_{\kappa t} \rho_0\bar \cW|f\rangle\right|\le \left|
		\langle \left(\bar P^n_t \rho_0-\bar {\cP}_{\kappa t} \rho_0\right)\bar \cW^n| f\rangle
		\right|+\left|
		\langle \bar {\cP}_{\kappa t} \rho_0\bar\cW^n|f\rangle - \langle \bar {\cP}_{\kappa t}\rho_0\bar \cW|f\rangle
		\right|\ .	
		\end{align*}
		\item For every compact subset  $K\supseteq {\rm supp}(f)$ of $\R$, 	   $\bar\Q$-a.s.,  
		\begin{align*}
		\left|\langle \left(\bar P^n_t\rho_0-\bar {\cP}_{\kappa t}\rho_0\right)\bar \cW^n|f\rangle\right|\le 	\left\| \IND_K\left(\bar P^n_t \rho_0-\bar {\cP}_{\kappa t} \rho_0\right) \right\|_{n,\infty}\langle \bar \cW^n|\left|f\right|\rangle\ .	
		\end{align*}
		By the assumed continuity of $\rho_0$, we may apply item \eqref{it:coupling4} in Proposition \ref{pr:coupling}, yielding, together with \eqref{eq:conv-W-bar}, $\bar \Q$-a.s., $	\langle \left(\bar P^n_t \rho_0-\bar {\cP}_{\kappa t} \rho_0\right)\bar \cW^n| f\rangle\to 0$.
		\item Since, by item \eqref{it:coupling1.5} in Proposition \ref{pr:coupling}, $\bar \Q$-a.s., $\bar {\cP}_{\kappa t} \rho_0 \cdot f \in \cC_c^+(\R)$ for all $f \in \cC_c^+(\R)$, item \eqref{it:coupling3} in Proposition \ref{pr:coupling} implies, $\bar \Q$-a.s., 
		\begin{equation}
		\langle \bar {\cP}_{\kappa t} \rho_0\bar \cW^n | f\rangle \underset{n\to \infty}\longrightarrow \langle  \bar {\cP}_{\kappa t}\rho_0\bar \cW|f\rangle\ ,\qquad f \in \cC_c^+(\R)\ .
		\end{equation}

	\end{enumerate}
	This concludes the proof of the lemma.
\end{proof}

\begin{appendix}
    \section{Construction of the infinite particle system}\label{section: construction}
    In this section, we provide a construction of the infinite particle system of interacting ${\rm BTM}(a)$ solely based on the self-duality and self-intertwining relations satisfied by the system of finitely many interacting ${\rm BTM}(a)$ (see, e.g., \cite[\S 4.1]{FloreaniJansenRedigWagner}).
    
    \label{sec:appendix-second-construction}
    More precisely, let us prove that, for $\Q$-a.e.\ environment $\alpha$ and for all $\eta \in \Xi=\Xi_\alpha$, there exists a $\Xi$-valued Markov process $(\eta_t)_{t\ge0}$ with marginal distributions being the infinite-particle analogues of those of the finite particle system.

    All throughout, fix an environment $\alpha \in \N^{\Z^d}$.
    \begin{itemize}
        \item The state space $\Xi=\Xi_\alpha:=\prod_{x\in\Z^d}\{0,1,\ldots, \alpha_x\}$ endowed with the product topology is Hausdorff and compact  (as product of the Hausdorff and compact spaces). We equip $\Xi$ with the product Borel $\sigma$-algebra.
        \item Let $\cT:=[0,T]$, and for every non-empty $\cS\subset \cT$ we consider $\Xi^\cS:= \prod_{s \in \cS}\Xi$ endowed with the product $\sigma$-algebra $\otimes_{s\in \cS}\,\cB_\Xi$.
    \end{itemize}
    The idea is that of applying Kolmogorov extension theorem in space first, and then in time. For this purpose, we need the finite-particle dual process' existence as an input: for every $k \in \N$, 
    \begin{equation}
    p^{(k)}_t(\mathbf x,\mathbf y):= \mathbf P_{\mathbf x}\left(\mathbf X_t=\mathbf y\right)\ ,\qquad t \ge 0\ ,\ \mathbf x, \mathbf y \in (\Z^d)^k\ ,
    \end{equation}
    denotes the transition probability of $k$-stirring labeled particles in the ladder-environment $\alpha$, as defined, e.g., in \cite[App.\ A]{floreani_HDL_2021}  (see also \S\ref{sec:appendix-first-construction} below for this ladder construction). Because of the well-posedness of the single-particle dynamics (i.e., the random walk) from \textit{all} initial positions, this stirring dynamics  with finitely-many particles is likewise well-defined. It is simple to check that, for $k\in \N$ and   $\mathbf x=(x_1,\ldots, x_k)\in (\Z^d)^k$,
    \begin{align}
    \alpha(\mathbf x):= \alpha_{x_1}\left(\alpha_{x_2}-\IND_{x_1}(x_2)\right)\cdots \left(\alpha_{x_k}-\sum_{j=1}^{k-1}\IND_{x_j}(x_k)\right)\ , 
    \end{align}
    is the (unnormalized) reversible measure for the $k$-particle process $\mathbf X_t$.		
    For later convenience,   we also define the multivariate  falling factorials of $\eta\in \Xi$: 
    \begin{align}
    \eta(\mathbf x):= \eta(x_1)\left(\eta(x_2)-\IND_{x_1}(x_2)\right)\cdots \left(\eta(x_k)-\sum_{j=1}^{k-1}\IND_{x_j}(x_k)\right)\ .
    \end{align}

    \subsubsection*{Construction of one-time distributions}
    Fix $\eta \in \Xi$ and $t \in [0,T]$, and define the $\Xi$-valued random element $\eta_t$ as follows:
    \begin{itemize}
        \item For every finite $\Lambda = \{x_1,\ldots, x_\ell\} \subset \Z^d$, let $\eta_t^\Lambda:=(\eta_t(x_1),\ldots, \eta_t(x_\ell))$ be the $\ell$-dimensional random vector, whose law is denoted by $\pi^\Lambda_{\eta,t}$ and satisfies (see, e.g., \cite[\S 4.1]{FloreaniJansenRedigWagner})
        \begin{align}\label{eq:falling-factorial-moments}
        \int_{\prod_{x\in \Lambda}\{0,1,\ldots, \alpha_x\}} \eta^\Lambda(\mathbf x)\, \pi^\Lambda_{\eta,t}(\dd \eta^\Lambda) 
        &= \sum_{\mathbf y\in (\Z^d)^k} \eta(\mathbf y)\, p^{(k)}_t(\mathbf y,\mathbf x)
        \end{align}
        for all $\mathbf x \in \Lambda^k$, $k \in \N$. Note that:
        \begin{itemize}
            \item the right-hand side of \eqref{eq:falling-factorial-moments} is non-negative and finite since $\alpha(\mathbf x)\in [0,\infty)$,  $\alpha(\mathbf x)\, p_t^{(k)}(\mathbf x,\mathbf y)= \alpha(\mathbf y)\, p_t^{(k)}(\mathbf y,\mathbf x)$,  $\frac{\eta}{\alpha}\in [0,1]$, and $\sum_{\mathbf y \in (\Z^d)^k}p^{(k)}_t(\mathbf x,\mathbf y)=1$; 
            \item since $\prod_{x\in \Lambda}\{0,1,\ldots, \alpha_x\}$ is finite, the law $\pi^\eta_{t,\Lambda}$ is uniquely characterized by all its (factorial) moments in  \eqref{eq:falling-factorial-moments}.
        \end{itemize} 
        \item For each pair of  finite subsets $\Lambda_1\subset \Lambda_2\subset \Z^d $, the laws of $\eta_t^{\Lambda_1}$ and $\eta_t^{\Lambda_2}$ are compatible. Indeed, letting, for all functions $f:\prod_{x\in \Lambda_1}\{0,1,\ldots,\alpha_x\}\to \R$  and for all $\Lambda_1\subset \Lambda_2$,
        \begin{equation*}
        \big(f\circ \varphi_{\Lambda_1,\Lambda_2}\big)(\eta^{\Lambda_2}):= 		f(\eta^{\Lambda_1})\ ,
        \end{equation*}
        compatibility means that 
        \begin{equation}
        \pi^{\Lambda_1}_{\eta,t}\, f = \pi^{\Lambda_2}_{\eta,t}\,(f\circ \varphi_{\Lambda_1,\Lambda_2})\ ,\qquad f:\prod_{x\in \Lambda_1}\{0,1,\ldots, \alpha_x\}\to \R\ .
        \end{equation}
        However, this is clear since moments are measure-determining in this finite state space context (no summability condition is required).
    \end{itemize}
    
    By the Kolmogorov extension theorem (see, e.g., \cite[Thm.\ 7.7.1]{bogachev_measure_2007}, but also the version in \cite{friedli_statistical_2017}, Thm.\ 6.6 and its proof in \S6.12.4, which is carried out precisely for configuration spaces with compact single-spin spaces), there exists a unique law $\pi_{\eta,t}$ on (the product Borel $\sigma$-algebra) $\Xi$ such that its push-forward measure via  the canonical projection from $\Xi$ to $\prod_{x\in \Lambda}\{0,1,\ldots, \alpha_x\}$ coincides with $\pi^\Lambda_{\eta,t}$.

    \subsubsection*{Construction of finite-dimensional distributions}  We define inductively on $n \in \N$ the measures $\pi_{\eta,t_1,\ldots, t_n}$ on the product space $\Xi^n$ as follows:
    \begin{itemize}
        \item For $n=1$, for  all $t\in [0,T]$ and $\eta \in \Xi$, $\pi_{\eta,t}$ was constructed in the previous subsection.
        \item Assume that for  $n\ge 1$, for all $0\le t_1<\ldots<t_{n-1}$ and $\eta \in \Xi$, the measure $\pi_{\eta,t_1,\ldots, t_{n-1}}$  on $\Xi^{n-1}$ is well-defined; then,  we construct $\pi_{\eta,t_1,\ldots, t_n}$ as follows: the random vector
        \begin{align}
        \left(\eta_{t_1},\ldots, \eta_{t_n}\right) \in \Xi^n
        \end{align}
        is  such that:
        \begin{itemize}
            \item the marginal $(\eta_{t_1},\ldots, \eta_{t_{n-1}})\in \Xi^{n-1}$ is distributed as $\pi_{\eta,t_1,\ldots, t_{n-1}}$;
            \item conditionally on $(\eta_{t_1},\ldots, \eta_{t_{n-1}})$, the last marginal $\eta_{t_n} \in \Xi$ is distributed as $\pi_{\eta_{t_{n-1}},t_n-t_{n-1}}$ as constructed in the previous subsection for all initial conditions in $\Xi$ and times.
        \end{itemize}
    \end{itemize}
    This construction yields, for all initial conditions $\eta\in \Xi$, a collection of consistent measures $\{\pi_{\eta,\mathsf t}: \mathsf t=(t_1,\ldots, t_n)\,  ,\ 0\le t_1,\ldots, t_n\le T\, ,\ n \in \N\}$. Since $\Xi$ is compact, for every $t \in [0,T]$, $\pi_{\eta,t}$ has a compact approximating class as in the hypothesis of  \cite[Thm.\ 7.7.1]{bogachev_measure_2007}. Therefore, Kolmogorov's extension theorem therein applies in our context and there exists a unique probability measure $\pi_\eta$ on (the product $\sigma$-algebra of) $\Xi^{[0,T]}$.

    \subsection{Ladder and stirring construction}\label{sec:appendix-first-construction}
        
    For $\Q$-a.e.\ environment $\alpha$, following, e.g., \cite[App.\ A]{floreani_HDL_2021}, 	let us  enlarge the configuration space $\Xi=\Xi_\alpha:=\prod_{x\in \Z^d}\{0,1,\ldots, \alpha_x\}$ by adding so-called \emph{ladders}. This enlargement is the key to reduce the problem of defining a system of  interacting  ${\rm BTM}(a)$ to that for a classical symmetric exclusion process allowing at most one particle per site. 
    
    Consider the following configuration space
    \begin{equation}
        \widetilde \Xi:= \prod_{x\in \Z^d}\prod_{i=1}^{\alpha_x}\{0,1\}\ ,
    \end{equation}
    with $x\in \Z^d$ denoting the original site on $\Z^d$ and $i\in \{1,\ldots, \alpha_x\}$ the $i^{th}$ ladder at $x\in \Z^d$. Letting $S=S^{d,\alpha}:=\{(x,i)\mid  x\in \Z^d\,,\, i=1,\ldots, \alpha_x\}$, observe that $\widetilde \Xi=\{0,1\}^S$. Now, for every parameter  $a\in [0,1]$, we wish to define a symmetric exclusion process $(\zeta_t)_{t\ge 0}$ on $\widetilde \Xi$ with the following jumping mechanism: a particle jumps from $(x,i)\in S$ to $(y,j)\in S$ with rate given by
    \begin{equation}\label{eq:jump-ladder}
        \zeta(x,i)\left(1-\zeta(y,j)\right)\alpha_x^{a-1}\alpha_y^{a-1}	\IND_{|x-y|=1}\ ,
    \end{equation}
    with $|\cdot|$ denoting here the usual Euclidean distance on $\R^d$.
    In particular, note that, besides the exclusion rule (encoded in the term $\zeta(x,i)\left(1-\zeta(y,j)\right)$)  particles jump with a conductance $\alpha_x^{a-1}\alpha_y^{a-1}\IND_{|x-y|=1}$ which does not depend on the ladder labels $i$ and $j$. Hence, it is immediate to see that, by running a single particle on $S$ according to these rules and ignoring the ladder labels, one recovers the ${\rm BTM}(a)$ defined in \S\ref{sec:BTM}.	
    
    Furthermore, since, $\Q$-a.s.,   $\alpha_x$ is finite for all $x\in \Z^d$, the maximal jump rate from every $(x,i)\in S$ is $\Q$-a.s.\ finite:
    \begin{equation}
        \sum_{\substack{y\in \Z^d\\
                |y-x|=1}} \sum_{j=1}^{\alpha_y}\alpha_x^{a-1}\alpha_y^{a-1} \le  \alpha_x\sum_{\substack{y\in \Z^d\\
                |y-x|=1	}}\alpha_y<\infty\ ,\qquad \Q\text{-a.s.}\ .
    \end{equation}
    This and  the well-known fact  that, for $\Q$-a.e.\ $\alpha$, the ${\rm BTM}(a)$ starting from any $x\in \Z^d$ does not explode a.s.\ are the two key ingredients in \cite[Assumption SSEP]{faggionato_graphical_2023} to ensure, by means of a Harris' graphical construction,  existence of a unique Feller process associated to the jump dynamics given in \eqref{eq:jump-ladder}.
    
    Let $(\zeta_t)_{t\ge 0}$ denote such a Feller process evolving on $\widetilde \Xi$, and let $\widetilde \cL:\widetilde \cD\subset \cC(\widetilde \Xi)\to \cC(\widetilde \Xi)$ denote the corresponding infinitesimal generator. As stated in \cite[Proposition 3.4]{faggionato_graphical_2023}, local functions belong to the domain $\widetilde \cD$ of the generator, and, for such a function $\varphi:\widetilde \Xi\to \R$, we have, for all $\zeta \in \widetilde \Xi$,
    \begin{align}
        \widetilde \cL\varphi(\zeta)&= \sum_{(x,i)\in S}\zeta(x,i)\,\alpha_x^{a-1}\sum_{\substack{(y,j)\in S\\
                |y-x|=1}}\alpha_y^{a-1}\left(1-\zeta(y,j)\right)\big(\varphi(\zeta^{(x,i),(y,j)})-\varphi(\zeta)\big)\\
        &= \sum_{x\in \Z^d}\alpha_x^{a-1} \sum_{\substack{y\in \Z^d\\
                |y-x|=1}}\alpha_y^{a-1} \sum_{i=1}^{\alpha_x}\zeta(x,i)\sum_{j=1}^{\alpha_y}\left(1-\zeta(y,j)\right)\big(\varphi(\zeta^{(x,i),(y,j)})-\varphi(\zeta)\big)\ ,
    \end{align}
    with $\zeta^{(x,i),(y,j)}$ being obtained from $\zeta$ by swapping occupation variables at $(x,i)$ and $(y,j)\in S$. This swapping operation may be also represented via a stirring procedure, and the existence and non-explosiveness  of the finite particle system readily follows from here.
    
    We conclude by observing that, by introducing the following fields associated to $\zeta_t$
    \begin{equation}
        \widetilde \cZ_t^n:= \frac1{n^d}\sum_{x\in \Z^d}\left(\frac{1}{\alpha_x}\sum_{i=1}^{\alpha_x}\zeta_{t\theta_n}(x,i)\right) \delta_{x/n}
    \end{equation}
    and
    \begin{equation}
        \widetilde \cX_t^n:=\frac{1}{n^{d/\beta}}\sum_{x\in \Z^d} \left(\sum_{i=1}^{\alpha_x}\zeta_{t\theta_n}(x,i)\right) \delta_{x/n}\ ,
    \end{equation}
    the analogue of Theorem \ref{th:HDL} for these fields holds true.
\end{appendix}
\vspace{1cm}

\noindent\textbf{Acknowledgments.} 
    The authors wish to thank Frank Redig for suggesting the problem, his insights and support.
    S.F.\ thanks Noam Berger, David Croydon, Ben Hambly and Takashi Kumagai for useful and inspiring discussions, as well as Simona Villa for making the picture.

	F.S.\ was partially supported by the Lise Meitner fellowship, Austrian Science Fund (FWF):	M3211.
    S.F.\ acknowledges financial support from the Engineering and Physical Sciences Research Council of the United Kingdom through the EPSRC Early Career Fellowship EP/V027824/1.
     A.C., S.F.\ and F.S.\ thank the  Hausdorff Institute for Mathematics (Bonn) for its hospitality during the Junior Trimester Program \textit{Stochastic modelling in life sciences} funded by the Deutsche Forschungsgemeinschaft (DFG, German Research Foundation) under Germany’s Excellence Strategy - EXC-2047/1 - 390685813. While this work was written, A.C.~was associated to INdAM (Istituto Nazionale di Alta Matematica ``Francesco Severi'') and the group GNAMPA.


\begin{thebibliography}{10}
	
	\bibitem{BarlowCerny}
	{\sc Barlow, M.~T.,  and Černý, J.}
	 Convergence to fractional kinetics for random walks associated with unbounded conductances.
	 {\em Probab. Theory Related Fields 4149}, 3 (2011), 639--673.
	
	\bibitem{BenArousCerny05}
	{\sc Ben Arous, G., and Černý, J.}
	 Bouchaud model exhibits two different aging regimes in dimension one.
	 {\em Ann. Probab. 15}, 2 (2005), 1161--1192.
	
	\bibitem{ben-arous_cerny_dynamics_2006}
	{\sc Ben Arous, G., and Černý, J.}
	 Dynamics of trap models.
	 In {\em Mathematical statistical physics \/} (Elsevier B. V., Amsterdam, 
	2006),  pp.~331--394

	\bibitem{bogachev_measure_2007}
	{\sc Bogachev, V.~I.}
	 {\em Measure theory. Vol. I, II.}
	 Springer--Verlag, Berlin, 2007. 
	
	\bibitem{Bou92}
	{\sc Bouchaud, J.~P.}
	 Weak ergodicity breaking and aging in disordered systems.
	 In {\em  J. Phys. I \/} (France),  2(1992) 1705--1713	
	
	\bibitem{cernyEJP}
	{\sc Černý, J.}
	 On two-dimensional random walk among heavy-tailed conductances.
	 {\em Electron. J. Probab. 16\/}, 10 (2011),  293--313.
	
	\bibitem{chen_time-2017}
	{\sc Chen, Z.~Q.}
	 Time fractional equations and probabilistic represntation.
	 {\em Chaos, Solitons \& Fractals 102} (2017), 168--174.

			\bibitem{Chiarini_Flo_sau}
	{\sc Chiarini, A., Floreani, S. and Sau, F. }
	\newblock From quenched 
	invariance principle
	to semigroup convergence with 
	applications to  exclusion processes.
	\newblock {\em  Electron. Commun. Probab. 29}, (2023), 1--17.
	
	
	\bibitem{croydon2017time}
	{\sc Croydon, D., Hambly, B., and  Kumagai, T.}
	 Time-changes of stochastic processes associated with resistance forms.
	 {\em Electron. J. Probab. 22\/} (2017), 1--41.
	
	\bibitem{croydon2019heat}
	{\sc Croydon, D.~A., Hambly, B.~M., and Kumagai, T.}
	 Heat kernel estimates for FIN processes associated with resistance forms.
	 {\em Stoch. Proc. Appl. 129}, 9 (2019), 2991--3017.
	
	\bibitem{etheridge2011}
	{\sc Etheridge, A.}
	 {\em Some Mathematical Models from Population Genetics: École D'Été de Probabilités de Saint-Flour XXXIX-2009.}
	 (Vol. 2012) Springer Science \& Business Media, 2011. 
	
	
	
	\bibitem{faggionato_bulk_2007}
	{\sc Faggionato, A.}
	 Bulk diffusion of 1{D} exclusion process with bond disorder.
	 {\em Markov Process. Related Fields 13}, 3 (2007), 519--542.
	 
	
	\bibitem{faggionato_cluster_2008}
	{\sc Faggionato, A.}
	 Random walks and exclusion processes among random conductances on
	random infinite clusters: homogenization and hydrodynamic limit.
	 {\em Electron. J. Probab. 13\/} (2008), no. 73, 2217--2247.
	
	\bibitem{faggionato2022hydrodynamic}
	{\sc Faggionato, A.}
	 Hydrodynamic limit of simple exclusion processes in symmetric random environments via duality and homogenization.
	 {\em Probab. Theory Related Fields 184}, 3 (2022), 1093--1137.
	 
	  \bibitem{faggionato_graphical_2023}
	 {\sc Faggionato, A.}
	 Graphical constructions of simple exclusion processes with applications to random
	 environments. {\em arXiv.2304.07703} (2023).
	 
	
	\bibitem{faggionato_hydrodynamic_2009}
	{\sc Faggionato, A., Jara, M., and Landim, C.}
	 Hydrodynamic behavior of 1{D} subdiffusive exclusion processes with
	random conductances.
	 {\em Probab. Theory Related Fields 144}, 3-4 (2009), 633--667.

	\bibitem{FloreaniJansenRedigWagner}
	{\sc Floreani, S., Jansen, S., Redig, F., and Wagner, S.}
	 Intertwining and duality for consistent Markov processes.
	 {\em Electron. J. Probab. 29}, (2024), 1--34.
	
	\bibitem{floreani_HDL_2021}
	{\sc Floreani, S., Redig, F., and Sau, F.}
	 Hydrodynamics for the partial exclusion process in random environment.
	 {\em Stoch. Proc. Appl. 142}, (2021), 124--158.
	
	
	
	\bibitem{FIN02}
	{\sc Fontes, L.~R.~G., Isopi, M., and Newman, M.}
	 Random walks with strongly inhomogeneous rates and singular diffusions: convergence, localization and aging in one dimension.
	 {\em Ann. Probab. 30}, 2 (2002), 579--604.
	
	
	\bibitem{FrancoLandim2010}
	{\sc Franco, T., and Landim, C.}
	 Hydrodynamic Limit of Gradient Exclusion Processes with Conductances.
	 {\em Arch. Rational Mech. Anal. 195},  (2010), 409--439.
	
	\bibitem{friedli_statistical_2017}
	{\sc Friedli, S., and Velenik, Y.}
	 {\em Statistical mechanics of lattice systems: a concrete mathematical introduction.}
	 Cambridge University Press, Cambridge, 2018. 
	
	\bibitem{giardina_duality_2009}
	{\sc Giardin\`a, C., Kurchan, J., Redig, F., and Vafayi, K.}
	 Duality and hidden symmetries in interacting particle systems.
	 {\em J. Stat. Phys. 135}, 1 (2009), 25--55.
	
	
	\bibitem{goncalves_scaling_2008}
	{\sc Gon\c{c}alves, P., and Jara, M.}
	 Scaling limits for gradient systems in random environment.
	 {\em J. Stat. Phys. 131}, 4 (2008), 691--716.
	
	\bibitem{jara_hydrodynamic_2011}
	{\sc Jara, M.}
	 Hydrodynamic {Limit} of the {Exclusion} {Process} in {Inhomogeneous}
	{Media}.
	 In {\em Dynamics, {Games} and {Science} {II}\/} (Berlin, Heidelberg,
	2011), M.~M. Peixoto, A.~A. Pinto, and D.~A. Rand, Eds., Springer Berlin
	Heidelberg, pp.~449--465.
	
	\bibitem{jara_quenched_2008}
	{\sc Jara, M., and Landim, C.}
	 Quenched non-equilibrium central limit theorem for a tagged particle
	in the exclusion process with bond disorder.
	 {\em Ann. Inst. Henri Poincar\'{e} Probab. Stat. 44}, 2 (2008),
	341--361.
	
	\bibitem{JaraLandimTeixeira}
	{\sc Jara, M., Landim, C., and Teixeira, A.}
	 Quenched scaling limits of trap models.
	 {\em Ann. Probab. 39}, 1 (2011), 176--223.
	
	\bibitem{kipnis_scaling_1999}
	{\sc Kipnis, C., and Landim, C.}
	 {\em Scaling limits of interacting particle systems}, vol.~320 of
	{\em Grundlehren der Mathematischen Wissenschaften}.
	 Springer-Verlag, Berlin, 1999.
	
	
	\bibitem{liggett_interacting_2005-1}
	{\sc Liggett, T.~M.}
	 {\em Interacting particle systems}.
	 Classics in Mathematics. Springer-Verlag, Berlin, 2005.
	 Reprint of the 1985 original.
	
	\bibitem{nagy_symmetric_2002}
	{\sc Nagy, K.}
	 Symmetric random walk in random environment in one dimension.
	 {\em Period. Math. Hungar. 45}, 1-2 (2002), 101--120.
	
	\bibitem{redig_symmetric_2020}
	{\sc Redig, F., Saada, E., and Sau, F.}
	 Symmetric simple exclusion process in dynamic environment:
	hydrodynamics.
	 {\em Electron. J. Probab. 25\/} (2020), Paper No. 138, 47.
	
	\bibitem{redig_sau_2018}
	{\sc Redig, F., and Sau, F.}
	 Factorized Duality, Stationary Product Measures and
	Generating Functions.
	 {\em  J. Stat. Phys. 172\/}, 4 (2018),  980--1008.
	
	
	\bibitem{schutz_non-abelian_1994}
	{\sc Sch\"{u}tz, G., and Sandow, S.}
	 Non-{Abelian} symmetries of stochastic processes: {Derivation} of
	correlation functions for random-vertex models and
	disordered-interacting-particle systems.
	 {\em Phys. Rev. E Stat. Phys. Plasmas Fluids Relat. Interdiscip.
		Topics 49}, 4 (1994), 2726--2741.
	
	\bibitem{Sokolov_etAL02}
	{\sc Sokolov, I.-M.,  Klafter, J. and Blumen, A.}
	 Fractional kinetics.
	 {\em  Physics Today 55\/}, 11(2002),  48--54.
	
	\bibitem{Valentim2011}
	{\sc Valentim, F.-J.}
	 Hydrodynamic limit of a $d$-dimensional exclusion process with conductances.
	 {\em Ann. Inst. Henri Poincar\'{e} Probab. Stat. 48}, 1 (2012),
	188–-211.	
	
\end{thebibliography}
\end{document}